\newtheorem{theorem}{Theorem}[section]
\newtheorem{lemma}[theorem]{Lemma}
\newtheorem{proposition}[theorem]{Proposition}
\newtheorem{corollary}[theorem]{Corollary}
\numberwithin{equation}{section}
\theoremstyle{remark}
\newtheorem{remark}[theorem]{Remark}
\newtheorem{definition}[theorem]{Definition}
\newcommand{\Ric}{\mathop{\mathrm{Ric}}}
\def\ppt{\frac{\partial}{\partial t}}
\def\RR{{\mathrm R}}
\def\Rc{{\mathrm {Rc}}}
\def\SS{{\mathrm S}}
\def\Hs{{\mathrm {Hess}}}
\title{Heat kernel estimates under the Ricci-Harmonic map flow}
\author{Mihai B\u{a}ile\c{s}teanu}
\thanks{Department of Mathematics, University of Rochester, 801 Hylan Bld, Rochester, NY 14627, USA \texttt{mbailest@z.rochester.edu}}
\author{Hung Tran}
\thanks{Department of Mathematics, Cornell University, 104 Malott Hall, Ithaca, NY 14853, USA \texttt{htt4@cornell.edu}}
\begin{document}

\begin{abstract}
The paper considers the Ricci flow, coupled with the harmonic map flow between two manifolds. We derive estimates for the fundamental solution of the corresponding conjugate heat equation and we prove an analog of Perelman's differential Harnack inequality. As an application, we find a connection between the entropy functional and the best constant in the Sobolev imbedding theorem in $\mathbb{R}^n$. 
\end{abstract}

\maketitle

\section{Introduction}

Considering two Riemannian manifolds $(M,g)$ and $(N,\gamma)$ and a map $\phi:M\to N$, one defines the Ricci-harmonic map flow as the coupled system of the Ricci flow with the harmonic map flow of $\phi$ given by the following system of equations: \begin{equation}\label{RH_flow_intro}
\begin{cases}\frac\partial{\partial t} g(x,t)=-2\Ric(x,t)+2\alpha(t)\nabla\phi(x,t)\otimes\nabla\phi(x,t)\\
\frac{\partial}{\partial t}\phi(x,t)=\tau_g\phi(x,t)
\end{cases}
 \end{equation} 
$\alpha$ is a positive non-increasing coupling time-dependent function, while $\tau_g\phi$ represents the tension field of the map $\phi$ with respect to the metric $g(t)$. We will call this system the $(RH)_\alpha$ flow and denote as $(g(x,t), \phi(x,t))$ with $t\in[0,T]$ a solution to this flow.  As a result of the coupling, it may be less singular than both the Ricci flow (to which it reduces when $\alpha(t)=0$) and the harmonic map flow. Assuming that the curvature of $M$ remains bounded for all $t\in [0,T]$, we further consider a function $h:M\times [0,T)\times M\times[0,T)\to (0,\infty)$ which is defined implicitly from the following expression $H(x,t;y,T)=(4\pi(T-t)^{-n/2})e^{-h}$, where $n$ is the dimension of $M$ and $H$ is the fundamental solution of the conjugate heat equation: 
\begin{equation}\label{conj_heat_intro}
\Box^*H=\left(-\frac{\partial}{\partial t}-\triangle+S\right)H=0
\end{equation} 
for $S=R-\alpha|\nabla\phi|^2$. The goal of this paper is to study the behavior of the heat kernel $H$ and to prove a Harnack inequality involving the function $h$.

The study of the $(RH)_\alpha$ proves to be useful, since it encompasses in greater generality other flows, for example the Ricci flow on warped produce spaces.  

Historically, it first appeared in \cite{Muller09}, where the author proved short time existence and studied energy and entropy functionals, existence of singularities, a local non-collapsing property etc. His inspiration was a version of this flow, which appeared earlier in the work of List \cite{List08}, where the case of $\phi$ being a scalar function and $\alpha=2$ was analyzed, and where it was shown to be equivalent to the gradient flow of an entropy functional, whose stationary points are solutions to the static Einstein vacuum equations. 

As mentioned above, another instance where the $(RH)_\alpha$ flow arises is when one studies Ricci flow on warped product spaces. More precisely, given a warped product metric $g_M=g_N+e^{2\phi}g_F$ on a manifold $M=N\times F$ (where $\phi\in C^{\infty}(N)$), the Ricci flow equation on $M$: $\frac{\partial g_M}{\partial t}=-2\Ric_M$, leads to the following equations on each component: \[\begin{cases}
\frac{\partial g_N}{\partial t}=-2\Ric_N+2m\ d\phi\otimes\ d\phi \\
\frac{\partial \phi}{\partial t}=\triangle \phi-\mu e^{-2\phi}                                                                                                                                                                                                                                                                                                                                                                                                                                                                                                                                                                                                                                                       \end{cases}\] 
if the fibers $F$ are $m$-dimensional and $\mu$-Einstein. Clearly, this is a particular version of the $(RH)_\alpha$, where the target manifold is one dimensional, and has been studied by M. B. Williams in \cite{Williams13} and by the second author in \cite{Tran12} (when $\mu=0$).  

As in the Ricci flow case, the scalar curvature of a manifold evolving under the $(RH)_\alpha$ flow satisfies the heat equation with a potential (depending on the Ricci curvature of $M$, the map $\phi$ and the Riemann curvature tensor of $N$). Therefore the study of the heat equation and its fundamental solution becomes relevant for understanding of the behavior of the metric under the $(RH)_\alpha$.

A Harnack inequality constitutes a main tool used to study the heat equation, since it compares values at two different points at different times of the solution. A milestone in the field was P. Li and S.-T. Yau's seminal paper  \cite{ly86}, where the authors proved space-time gradient estimates, now called Li-Yau estimates, which, by integration over space-time curves give rise to Harnack inequalities for the heat equation. A matrix version of their result was later proved by Hamilton in \cite{hmatrix93}, who also initiated the study of the heat equation under the Ricci flow \cite{hhrf93,hmatrix93,Hsurvey}. Later this was pursued in \cite{z06,ni04,guenther02,ch09}. Notably, in his proof of the Poincar\'e conjecture, following Hamilton's program, Perelman established in \cite{perelman1} a Li-Yau-Hamilton inequality for the fundamental solution of the conjugate heat equation. Most recently, gradient estimates for the heat equation under the Ricci flow were analyzed in \cite{bcp10}, \cite{liu09} and \cite{JS11}
.   

The technique used in this paper is inspired by Perelman monotonicity formula approach to prove the pseudolocality theorem, and, in particular we derive estimates for the fundamental solution of the corresponding conjugate heat equation, which will lead to a Harnack inequality. 

Since it is relevant to our approach, let's recall Perelman's result. For $(M,g(t)), 0\leq t\leq T$ is a solution to the Ricci flow on an $n$-dimensional closed manifold $M$, define $H=(4\pi(T-t))^{-n/2}e^{-h}$ the fundamental solution of the conjugate heat equation: \[\Box^* H=(-\partial_t-\triangle+R)H=0\]centered at $(y,T)$. Then the quantity \[v=\left((T-t)(2\triangle h-|\nabla h|^2+R)+h-n\right)H\] satisfies $v\leq 0$ for all $t<T$. This inequality proves to be crucial in the study of the functionals developed by Perelman. Moreover, recently X. Cao and Q. Zhang used this inequality in \cite{cz10} to study the behavior of the type I singularity model for the Ricci flow - they proved that, in the limit, one obtains a gradient shrinking Ricci soliton.

The Harnack inequalities that we obtain in our paper are stated in the following theorem and corollary:

\begin{theorem}\label{thm-Harnack}
Let $(\phi(x,t),g(x,t))$, $0\leq t\leq T$ be a solution to (\ref{RH_flow_intro}). Fix $(y,T)$ and let $H=(4\pi(T-t))^{-n/2}e^{-h}$ be the fundamental solution of $(-\partial_t-\triangle+S)H=0$ where $S=R-\alpha|\nabla\phi|^2$. Define $v=\left((T-t)(2\triangle h-|\nabla h|^2+S)+h-n\right)H.$ Then for all $t<T$ the inequality $v\leq 0$ holds true. 
\end{theorem}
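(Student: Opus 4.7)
The strategy follows Perelman's approach for the Ricci flow: establish $\Box^* v \le 0$ as a pointwise identity with a manifestly nonpositive right-hand side, then upgrade this to $v \le 0$ by a duality argument against the forward heat equation. Specifically, I would aim for an identity of the shape
\begin{equation*}
\Box^* v = -2(T-t) H \Bigl( \bigl| S_{ij} + \nabla_i\nabla_j h - \tfrac{1}{2(T-t)} g_{ij} \bigr|^2 + \alpha \bigl| \tau_g\phi - \nabla\phi(\nabla h) \bigr|_\gamma^2 \Bigr) + (T-t) H\,\dot\alpha\,|\nabla\phi|^2,
\end{equation*}
where $S_{ij} := R_{ij} - \alpha\nabla_i\phi\,\nabla_j\phi$ is M\"uller's modified Ricci tensor from \cite{Muller09}. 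Since $\dot\alpha \le 0$ by hypothesis, every term on the right is nonpositive, so $\Box^* v \le 0$ follows at once.

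To derive this identity, I would first extract the evolution equation for $h$ by substituting $H = (4\pi(T-t))^{-n/2} e^{-h}$ into $\Box^* H = 0$, which yields
\begin{equation*}
\partial_t h = -\Delta h + |\nabla h|^2 - S + \frac{n}{2(T-t)}.
\end{equation*}
Writing $v = PH$ with $P = (T-t)(2\Delta h - |\nabla h|^2 + S) + h - n$, a short product-rule calculation that uses $\Box^* H = 0$ to eliminate the $P\,\Box^*H$ term reduces the problem to
\begin{equation*}
\Box^* v = -H\bigl(\partial_t P + \Delta P - 2\nabla h\cdot\nabla P\bigr),
\end{equation*}
so the task is to compute $\mathcal{L}P := \partial_t P + \Delta P - 2\nabla h\cdot\nabla P$ and recognize it as the stated sum of squares. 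The required ingredients are the metric evolution $\partial_t g_{ij} = -2S_{ij}$; M\"uller's heat-type equation $\partial_t S = \Delta S + 2|S_{ij}|^2 + 2\alpha|\tau_g\phi|^2 - \dot\alpha|\nabla\phi|^2$; the evolution of $|\nabla h|^2$ and $\Delta h$ under the time-dependent metric, obtained from the Bochner formula together with commuting $\partial_t$ past covariant derivatives; and the modified contracted Bianchi identity $\nabla^j S_{ij} = \tfrac{1}{2}\nabla_i S - \alpha\langle \tau_g\phi, \nabla_i\phi\rangle_\gamma$, which organizes the cross-terms between curvature and target geometry. The main obstacle here is bookkeeping: substituting and grouping terms so that the cross-products $2\langle S_{ij}, \nabla^i\nabla^j h\rangle$ and $2\alpha\langle \tau_g\phi, \nabla\phi(\nabla h)\rangle_\gamma$ complete the two squares with the correct coefficients, while the $g_{ij}/(2(T-t))$ shift emerges from the $n/(2(T-t))$ and $h/(T-t)$ contributions. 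The structure of the answer is forced by the flow, but the algebra is lengthy.

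With $\Box^* v \le 0$ in hand, the final step is to pass to the pointwise bound. For any nonnegative solution $u$ of the forward heat equation $(\partial_t - \Delta)u = 0$, the conjugate-heat identity combined with $\partial_t\,dV_t = -S\,dV_t$ gives
\begin{equation*}
\frac{d}{dt}\int_M u v\, dV_t = -\int_M u\,\Box^* v\,dV_t \ge 0,
\end{equation*}
so $\int_M u v\, dV_t$ is nondecreasing in $t$. A short asymptotic analysis as $t\to T^-$, based on $h(x,t)\sim d(x,y)^2/(4(T-t))$ and the concentration $H\to\delta_y$, shows that the prefactor $P$ in $v = PH$ tends to $0$ along the concentration, hence $\int_M u v\,dV_t \to 0$. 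Taking $u$ to be the forward heat kernel based at a chosen $(x_0, t_0)$ with $t_0<T$ then yields $v(x_0, t_0) = \int_M u v\,dV_{t_0}\le \lim_{t\to T^-}\int_M u v\,dV_t = 0$, which is the desired conclusion. In the noncompact setting the integration-by-parts is justified by the bounded-curvature hypothesis and the standard Gaussian bounds on $H$ and $u$, as in \cite{perelman1}.
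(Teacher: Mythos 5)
Your overall strategy is the same as the paper's: compute $\Box^*v$ and show it is pointwise nonpositive, deduce from $(\partial_t - \Delta)u = 0$ and $\partial_t dV = -S\,dV$ that $\ddt\int_M uv\,dV = -\int_M u\,\Box^*v\,dV \ge 0$, show $\int_M uv\,dV \to 0$ as $t\to T^-$, and conclude $v\le 0$ by choosing $u$ to concentrate at an arbitrary point. One point in your favor: your claimed identity
\begin{equation*}
\Box^* v = -2(T-t)H\left(\left|\mathcal{S}+\Hs h-\tfrac{g}{2(T-t)}\right|^2+\alpha\left|\tau_g\phi-\nabla\phi(\nabla h)\right|^2 - \tfrac12\alpha'(t)|\nabla\phi|^2\right)
\end{equation*}
is actually the correct one. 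The paper's version omits the cross-term $-2\alpha\langle\tau_g\phi,\nabla\phi(\nabla h)\rangle$; this comes from the modified contracted second Bianchi identity $\nabla^j S_{ij}-\tfrac12\nabla_i S = -\alpha\,\tau_g\phi^\mu\nabla_i\phi^\mu$, which contributes to the evolution of $\triangle h$ under $(RH)_\alpha$ (it vanishes under pure Ricci flow, which is why one might forget it). The completed square $\left|\tau_g\phi-\nabla\phi(\nabla h)\right|^2$ is the right object; it also matches the quantity $2\alpha|\tau_g\phi - \nabla_X\phi|^2$ appearing in the paper's own Lemma \ref{compareRDandconjheat}(b). Since both expressions are manifestly nonpositive this does not change the conclusion $\Box^*v\le0$, but your form is the clean one.

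The genuine gap is in your step ``a short asymptotic analysis\dots shows that the prefactor $P$ in $v=PH$ tends to $0$ along the concentration, hence $\int_M uv\,dV_t\to0$.'' This is the technical heart of the theorem and is far from short. The prefactor $P = \tau(2\triangle h - |\nabla h|^2 + S) + h - n$ contains terms $\tau\triangle h$, $\tau|\nabla h|^2$, $h$ that individually blow up as $\tau\to0$ and only cancel after integration against $H\Phi\,dV$, and this cancellation requires quantitative control. The paper's Proposition \ref{limit0} obtains $\lim_{t\to T}\int_M v\Phi\,dV = 0$ by assembling four separate estimates: (i) the heat-kernel upper bound of Lemma \ref{conjestimateS} involving $\inf_\tau\mu_\alpha$ and $\inf S(\cdot,0)$; (ii) the gradient estimate of Lemma \ref{gradconjS}, $\tau|\nabla q|^2/q^2\le(1+C_1\tau)(\ln(A/q)+C_2\tau)$, which needs bounded curvature and bounded $|\nabla\SS|$; (iii) the reduced-distance comparison $h\le\ell_\phi$ and the resulting bound $\int_M(h-\tfrac n2)H\Phi\,dV\le0$ of Lemma \ref{integralboundabove}; and (iv) a mean-value argument along a subsequence $\tau_i\to0$ to kill the Hessian term, followed by a contradiction with $\lim_{\tau\to0^+}\mu_\alpha=0$ (Lemma \ref{basicmu}(d)) to upgrade $\lim\rho_\Phi\le 0$ to $\lim\rho_\Phi=0$. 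None of this is captured by the observation that $P$ heuristically vanishes at $(y,T)$, and the entropy input in step (iv) is not even hinted at in your sketch. As a proposal the structure is right, but to turn it into a proof you would need to supply essentially all of Section \ref{grad-est} and Proposition \ref{limit0}.
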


\begin{corollary}\label{cor-LYH} Under the above assumptions, let $\gamma(t)$ be a curve on $M$ and $\tau=T-t$. Then the following LYH-type Harnack estimate holds:
\begin{align}\label{LYH}
-\partial_th(\gamma(t),t)&\leq\frac{1}{2}(S(\gamma(t),t)+|\dot\gamma(t)|^2)-\frac{1}{2(T-t)}h(\gamma(t),t)\notag\\
\partial_\tau(2\sqrt{\tau}h)&\leq\sqrt{\tau}((S(\gamma(t),t)+|\dot\gamma(t)|^2)).
\end{align}
\end{corollary}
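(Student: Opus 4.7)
The plan is to convert the pointwise inequality $v\le 0$ of Theorem~\ref{thm-Harnack} into a first-order estimate for $h$ by using the conjugate heat equation to eliminate $\Delta h$, and then to transfer this to a curve-dependent inequality via an AM-GM bound on the cross term $\langle\nabla h,\dot\gamma\rangle$. The second line of~(\ref{LYH}) will then follow from the first by a one-line change of variable $\tau=T-t$.

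First, since $H>0$, dividing $v\le 0$ by $H$ yields the pointwise estimate $(T-t)(2\Delta h-|\nabla h|^2+S)+h-n\le 0$. To remove the Laplacian, I translate $\Box^*H=0$ into a scalar equation for $h$: using $\nabla H=-H\nabla h$ and $\Delta H=H(|\nabla h|^2-\Delta h)$, a direct computation yields
\begin{equation*}
\partial_t h=\frac{n}{2(T-t)}+|\nabla h|^2-\Delta h-S.
\end{equation*}
Substituting the corresponding expression for $2\Delta h$ into the previous display, the $n/(T-t)$ terms cancel and, after dividing by $2(T-t)>0$, one arrives at the key pointwise bound
\begin{equation*}
-\partial_t h+\tfrac12|\nabla h|^2 \le \tfrac12 S-\frac{h}{2(T-t)}.
\end{equation*}

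Next, I restrict $h$ to the curve and expand $\frac{d}{dt}h(\gamma(t),t)=\partial_t h+\langle\nabla h,\dot\gamma\rangle$. The elementary inequality $|\nabla h+\dot\gamma|^2\ge 0$ rearranges to $-\langle\nabla h,\dot\gamma\rangle\le \tfrac12|\nabla h|^2+\tfrac12|\dot\gamma|^2$; combining with the previous display causes the $|\nabla h|^2$ term to cancel, leaving
\begin{equation*}
-\frac{d}{dt}h(\gamma(t),t)\le \tfrac12\bigl(S+|\dot\gamma|^2\bigr)-\frac{h(\gamma(t),t)}{2(T-t)},
\end{equation*}
which is the first line of~(\ref{LYH}), under the convention that $\partial_t h(\gamma(t),t)$ denotes the total $t$-derivative of the function restricted to the curve. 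For the second line, along $\gamma$ one has $\partial_\tau=-d/dt$, so $\partial_\tau(2\sqrt\tau\, h)=h/\sqrt\tau-2\sqrt\tau\,\frac{d}{dt}h(\gamma(t),t)$; multiplying the first line by $2\sqrt\tau>0$ and rearranging gives precisely $\partial_\tau(2\sqrt\tau\, h)\le \sqrt\tau(S+|\dot\gamma|^2)$.

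The only conceptually nontrivial step is the AM-GM trade that converts the intrinsic quantity $|\nabla h|^2$ into the curve-dependent kinetic term $|\dot\gamma|^2$; the rest is routine algebra combining Theorem~\ref{thm-Harnack} with the scalar equation for $h$, and introduces no estimates beyond those already established in the theorem.
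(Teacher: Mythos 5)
Your proof is correct and is exactly the ``standard argument due to Perelman'' that the paper invokes without spelling out: divide $v\le 0$ by $H>0$, use the scalar form of $\Box^*H=0$ to eliminate $\Delta h$, and complete the square on $\langle\nabla h,\dot\gamma\rangle$ to pass from the pointwise inequality to the curve version. The only thing worth flagging is the notational point you already addressed: in the first line of~(\ref{LYH}) the symbol $\partial_t h(\gamma(t),t)$ must be read as the total derivative $\frac{d}{dt}h(\gamma(t),t)$, which is the interpretation under which the two displayed lines are equivalent under $\tau=T-t$.
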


One may try a different approach to estimate the heat kernel, by means of a Sobolev inequality. This method was used by the first author in \cite{mb10} to bound the heat kernel under the Ricci flow, using techniques developed by Q. Zhang in \cite{z06}, where the Perelman conjugate heat equation was studied. This approach requires fewer conditions on the curvature, and in a particular case, when at the starting time of the flow $S=R-\alpha|\nabla\phi|^2$ is positive, one obtains a bound similar to the one in the fixed metric case. This technique is quite useful, as it connects an analytic invariant (the best constant in the Sobolev imbedding theorem in $\mathbb{R}^n$) to the geometry of the manifold $M$.

The estimates are stated as follows:  
\begin{theorem}\label{theorem}
Let $M^n$ and $N^m$ be two closed Riemannian manifolds, with $n\geq 3$ and let $(g(t),\phi(t)), t\in[0,T]$ be a solution to the $(RH)_\alpha$ flow~\eqref{RH_flow_intro}, with $\alpha(t)$ a non-increasing positive function. Let $H(x,s;y,t)$ be the heat kernel, i.e.  fundamental solution for the heat equation $u_t=\bigtriangleup u$. Then there exists a positive number $C_n$, which depends only on the dimension $n$ of the manifold such that:
\begin{align*}
H(x,s;y,t)\leq &
\frac{C_n}{\left( \int\limits_{s}^{\frac{s+t}{2}}\left(\frac{m_0-c_n\tau}{m_0}\right)^{-2} \frac{e^{\frac{2}{n}F(\tau)}}{A(\tau)} \ d\tau\right)^{\frac{n}{4}} \left(\int\limits_{\frac{s+t}{2}}^{t}
\frac{e^{-\frac{2}{n}F(\tau)}}{A(\tau)} \ d\tau\right)^{\frac{n}{4}}} 
\end{align*}
for $0\leq s<t\leq T$; here $F(t)=\int\limits_{s}^{t}\left[\frac{B(\tau)}{A(\tau)}-\frac{3}{4}\cdot\frac{1}{m_0-c_n\tau}\right] d\tau $,  $1/m_0=\inf_{t=0}S$ - the infimum of $S=R-\alpha|\nabla\phi|^2$, taken at time $0$, and $A(t)$ and $B(t)$ are two positive time functions, which depend on the best constant in the Sobolev imbedding theorem stated above.
\end{theorem}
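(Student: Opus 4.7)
The plan is to mirror, in the $(RH)_\alpha$ setting, the Sobolev-inequality approach that the first author applied to the Ricci flow in \cite{mb10} (following Zhang \cite{z06}). The starting point is a time-dependent Sobolev inequality on $(M,g(t))$ of the form
$$\|u\|_{L^{2n/(n-2)}(g(t))}^2 \le A(t)\|\nabla u\|_{L^2(g(t))}^2 + B(t)\|u\|_{L^2(g(t))}^2,$$
in which $A(t)$ and $B(t)$ involve the sharp Euclidean Sobolev constant together with quantities controlled along the $(RH)_\alpha$ flow, in the spirit of Perelman's $\mu$-functional as adapted to this coupled flow by M\"uller. This fixes the two time-functions appearing in the statement.

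Next, I would record the evolution of $S = R-\alpha|\nabla\phi|^2$. Under $(RH)_\alpha$ one has
$$\partial_t S = \Delta S + 2|\Ric - \alpha\nabla\phi\otimes\nabla\phi|^2 + 2\alpha|\tau_g\phi|^2 - \alpha'|\nabla\phi|^2,$$
and since $\alpha'\le 0$, the Cauchy--Schwarz inequality $|\Ric-\alpha\nabla\phi\otimes\nabla\phi|^2\ge S^2/n$ gives $\partial_t S\ge \Delta S+\tfrac{2}{n}S^2$. The maximum principle then yields $\inf_M S(\tau)\ge (m_0-c_n\tau)^{-1}$ with $c_n=2/n$; this is the origin of the weight $(m_0-c_n\tau)^{-2}$ in the bound.

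With these two ingredients, the heart of the argument is a Moser--Davies iteration. For a positive solution $u$ of $u_\tau=\Delta u$ along $(RH)_\alpha$, using $\partial_\tau\,dg=-S\,dg$,
$$\tfrac{d}{d\tau}\int u^p\,dg = -\tfrac{4(p-1)}{p}\int|\nabla u^{p/2}|^2\,dg - \int S u^p\,dg.$$
Substituting the Sobolev inequality for $v=u^{p/2}$, interpolating via H\"older, and letting $p=p(\tau)$ vary with an integrating factor $e^{F(\tau)}$ chosen so as to absorb the $B(\tau)\|v\|_2^2$ contribution and the lower bound on $S$, one integrates the resulting ODE from $s$ up to an intermediate time $r=(s+t)/2$ and obtains an $L^1\to L^\infty$ ultracontractive estimate whose denominator is the first integral in the statement. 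The parallel computation applied to the conjugate equation $-w_\tau=\Delta w-Sw$, run from $t$ backward to $r$ with the integrating factor flipped in sign, produces the second integral with $e^{-(2/n)F(\tau)}$.

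Finally, I would splice the two halves together using the reproduction identity
$$H(x,s;y,t) = \int_M H(x,s;z,r)\,H(z,r;y,t)\,dg_r(z),\qquad r=\tfrac{s+t}{2},$$
and Cauchy--Schwarz in $z$, which reduces the pointwise bound on $H$ to the product of the two $L^1\to L^2$ ultracontractive estimates just derived. The main obstacle is the bookkeeping inside the variable-$p(\tau)$ Moser step: one has to select $p(\tau)$ and the integrating factor $F(\tau)$ simultaneously so that the $B(\tau)$ and $\int Su^p$ terms are swept cleanly into $F$, and so that the signs of $e^{\pm(2/n)F(\tau)}$ on the forward and conjugate halves are compatible with the duality between $u_\tau=\Delta u$ and its adjoint $-w_\tau=\Delta w-Sw$. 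Once that is done, collecting the constants produces the dimensional factor $C_n$ and the exact form of the denominator asserted in the theorem.
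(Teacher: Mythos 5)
Your proposal captures the same overall strategy as the paper: the time-dependent Sobolev inequality (via M\"uller's $\mathbb W_\alpha$ theory, cf.\ \cite{LXGWK13}), the reaction--diffusion inequality $\partial_t S\ge\triangle S+\tfrac2n S^2$ yielding $S\ge(m_0-c_n\tau)^{-1}$, the semigroup identity at the midpoint $r=(s+t)/2$, Cauchy--Schwarz, and an integrating factor $e^{\pm(2/n)F}$ applied to the forward and conjugate halves. Two points merit attention, though. First, the paper does \emph{not} run a variable-$p(\tau)$ Moser--Davies iteration: it keeps $p=2$ fixed, works directly with $\alpha(\tau)=\int_M H^2\,d\mu$ and $\beta(\tau)=\int_M H^2\,d\mu$, and closes the argument Nash-style by combining the Sobolev inequality with the H\"older interpolation $\int H^2\le\bigl(\int H^{2n/(n-2)}\bigr)^{(n-2)/(n+2)}\bigl(\int H\bigr)^{4/(n+2)}$; this sidesteps entirely the $\int p'(\tau)u^p\log u$ bookkeeping implicit in your variable-$p$ scheme, and your evolution formula for $\tfrac{d}{d\tau}\int u^p\,dg$ as written omits that term. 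Second --- and this is the sharper point --- the reason the weight $(m_0-c_n\tau)^{-2}$ appears in the first integral but not the second is an asymmetry you should make explicit: $\int_M H(x,s;y,t)\,d\mu(x,s)\equiv 1$ (conjugate side), but $J(t)=\int_M H(x,s;y,t)\,d\mu(y,t)$ is \emph{not} conserved since $\partial_t d\mu=-S\,d\mu$; the paper derives $J'\le -(m_0-c_n\tau)^{-1}J$, hence $J(t)\le\chi_{t,s}^{n/2}$, and it is this factor entering through the H\"older step on the forward half that produces the weight. Without controlling $J$, the Nash (or Davies) closure on the forward side does not go through, so this is not a cosmetic detail. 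The rest of your splicing and constant-collecting is consistent with what the paper does.
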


Notice that there are no curvature assumptions, but $B(t)$ will depend on the lower bound of the Ricci curvature and the derivatives of the curvature tensor at the initial time, as it will follow implicitly from Theorem \ref{thm_Aubin} presented in the section below.  

The estimate may not seem natural, but in a special case, when the scalar curvature satisfies $R(x,0)>\alpha(0)|\nabla\phi(x,0)|^2$, one obtains a bound similar to the the fixed metric case. Recall that J. Wang obtained in \cite{JW97} that the heat kernel on an $n$-dimensional compact Riemannian manifold $M$, with fixed metric, is bounded from above by $N(S)(t-s)^{-n/2}$, where $N(S)$ is the Neumann Sobolev constant of $M$, coming from a Sobolev imbedding theorem. Our corollary exhibits a similar bound:   

\begin{corollary}\label{Sob-cor}
Under the same assumptions as in theorem (\ref{theorem}), together with the condition that $R(x,0)>\alpha(0)|\nabla\phi(x,0)|^2$, there exists a positive number $\tilde{C}_n$, which depends only on the dimension $n$ of the manifold and on the best constant in the Sobolev imbedding theorem in $\mathbb{R}^n$, such that:
\begin{align*}
H(x,s;y,t)\leq \tilde{C}_n\cdot\frac{1}{(t-s)^{\frac{n}{2}}} \hspace{1cm} \text{ for } 0\leq s<t\leq T
\end{align*}
\end{corollary}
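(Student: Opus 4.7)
The plan is to show that, under the extra hypothesis $R(x,0)>\alpha(0)|\nabla\phi(x,0)|^{2}$, the positivity of $S=R-\alpha|\nabla\phi|^{2}$ is preserved along the flow and is strong enough to let us replace $B(\tau)$ by $0$ and $A(\tau)$ by a dimensional constant in the estimate of Theorem~\ref{theorem}, after which the two denominator integrals collapse to powers of $t-s$.

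The first step is propagation of positivity for $S$. Differentiating the definition along $(RH)_{\alpha}$ gives, as in M\"uller's derivation,
\begin{equation*}
\partial_{t} S = \triangle S + 2\bigl|\Ric - \alpha\,\nabla\phi\otimes\nabla\phi\bigr|^{2} + 2\alpha\,|\tau_{g}\phi|^{2} - \dot\alpha\,|\nabla\phi|^{2}.
\end{equation*}
Since $\alpha$ is non-increasing and $|\Ric - \alpha\nabla\phi\otimes\nabla\phi|^{2}\geq S^{2}/n$ by Cauchy--Schwarz, we have $\partial_{t} S \geq \triangle S + (2/n)S^{2}$. A standard ODE comparison then yields $S(x,\tau)\geq 1/(m_{0}-c_{n}\tau)>0$ on $[0,T]$, with $c_{n}=2/n$; this is exactly the quantity that appears through $(m_{0}-c_{n}\tau)/m_{0}$ in Theorem~\ref{theorem}, and under our hypothesis $m_{0}-c_{n}\tau$ stays pinched between two positive constants on $[s,t]$.

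Next I want to simplify the Sobolev data. The coefficient $B(\tau)$ in Theorem~\ref{theorem} arises from the curvature-dependent error piece of a Sobolev inequality along the flow, schematically of the form
\begin{equation*}
\Bigl(\int u^{2n/(n-2)}\Bigr)^{(n-2)/n} \leq A(\tau)\int\bigl(|\nabla u|^{2}+\tfrac14 S\,u^{2}\bigr) + B(\tau)\int u^{2}.
\end{equation*}
When $S\geq 1/(m_{0}-c_{n}\tau)>0$ the term $\tfrac14\int S u^{2}$ is nonnegative and can be dropped on the right, and the remaining error $B(\tau)\int u^{2}$ can be absorbed into $\tfrac14\int S u^{2}$ at the cost of a dimensional multiplicative constant. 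Re-running the derivation of Theorem~\ref{theorem} with $B(\tau)\equiv 0$ and $A(\tau)$ uniformly comparable to the best Sobolev constant $K_{n}$ in $\mathbb{R}^{n}$ gives the reduction I need.

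With $B\equiv 0$ and $A$ constant,
\begin{equation*}
F(\tau) \;=\; -\int_{s}^{\tau}\frac{3}{4(m_{0}-c_{n}\sigma)}\,d\sigma \;=\; \frac{3}{4c_{n}}\log\frac{m_{0}-c_{n}\tau}{m_{0}-c_{n}s},
\end{equation*}
so $e^{\pm 2F/n}$ is an explicit power of $(m_{0}-c_{n}\tau)/(m_{0}-c_{n}s)$. The two integrals in the denominator of the bound of Theorem~\ref{theorem} thereby reduce to elementary integrals of powers of $m_{0}-c_{n}\sigma$; since that expression is bounded away from $0$ and $\infty$ on $[s,t]$, each factor $\bigl(\int\cdots\bigr)^{n/4}$ is comparable to $(t-s)^{n/4}$ up to a constant depending only on $n$ and $K_{n}$, and the product delivers the claimed $\tilde C_{n}(t-s)^{-n/2}$. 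The real obstacle is Step 2: checking that the pointwise positivity of $S$ is strong enough to drop $B(\tau)$ (and not merely to bound it), so that the resulting logarithmic factors do not degrade the final exponent; once that clean Sobolev inequality is in hand, the rest is calculus.
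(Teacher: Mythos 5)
Your proof has a genuine gap, and in fact you flag it yourself: the claim that ``$B(\tau)\int u^{2}$ can be absorbed into $\tfrac14\int S u^{2}$ at the cost of a dimensional multiplicative constant'' is false as stated. The absorption $B(\tau)\int u^{2}\leq \frac{4B(\tau)}{\inf S}\cdot\tfrac14\int S u^{2}$ produces a multiplicative factor $1+\frac{4B(\tau)}{\inf S}$ in front of $A(\tau)$, and $B(\tau)$ is a curvature-dependent quantity (through the lower bound on $\Ric(g(0))$ and the derivatives of the curvature tensor); nothing forces it to be comparable to $\inf S$ by a dimensional constant. The resulting effective Sobolev coefficient therefore depends on the geometry of $(M,g(0),\phi(0))$, not only on $n$ and $K(n,2)$, contradicting the conclusion of the corollary. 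Your remaining steps compound this: you retain the $S$-lower-bound contribution $-\frac{3}{4(m_{0}-c_{n}\sigma)}$ inside $F$, so the factors $e^{\pm 2F/n}$ become explicit powers of $(m_{0}-c_{n}\tau)/(m_{0}-c_{n}s)$ that depend on $m_{0}=(\inf_{t=0}S)^{-1}$; a bound of the form ``comparable to $(t-s)^{n/4}$'' then necessarily carries $m_{0}$-dependence, again breaking the purely dimensional claim.

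The paper's proof avoids both problems and is simpler. First, positivity of $S$ at $t=0$ makes $J'(\tau)\leq 0$, hence $J(\tau)\leq J(s)=1$, and also makes the term $-\tfrac34\int SH^{2}\,d\mu$ in the differential inequality for $\alpha$ nonpositive; so both of the $m_{0}$-dependent pieces of Theorem~\ref{theorem} can simply be discarded rather than tracked, leaving $F(\tau)=\int_{s}^{\tau}\tfrac{B}{A}$. Second, and this is the essential point, the paper invokes Theorem~\ref{thm_Zhang} in its $(RH)_{\alpha}$ version (from \cite{LXGWK13}): when $\lambda_{0}^{\alpha}>0$, which holds whenever $R(\cdot,0)>\alpha(0)|\nabla\phi(\cdot,0)|^{2}$, the uniform Sobolev inequality already comes with $A(t)$ a constant equal to $K(n,2)$ and $B(t)\equiv 0$. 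This is a nontrivial theorem and cannot be recovered by the hand absorption you attempted. With $A$ constant and $B\equiv 0$, $F\equiv 0$, the two denominator integrals are exactly $(t-s)/(2A)$, and the clean bound $\tilde{C}_{n}(t-s)^{-n/2}$ with $\tilde{C}_{n}=\bigl(4K(n,2)/n\bigr)^{n/2}$ follows. To fix your argument, you should replace the absorption step with the citation of the uniform $(RH)_{\alpha}$ Sobolev inequality for $\lambda_{0}^{\alpha}>0$, and drop the $S$-term from $F$ rather than integrating it.
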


The exact expression of $\tilde{C}_n$ is $\left(\frac{4K(n,2)}{n}\right)^{\frac{n}{2}}$, where $K(n,2)$ is the best constant in the Sobolev imbedding in $\mathbb{R}^n$. Let's note that this result, in fact, improves the result in \cite{mb10}, since in this case the constants are sharper.

As an application, we can prove the following theorem, connecting the functional $\mathbb{W}_\alpha$ (which is analogous to Perelman's entropy functional) to the best constant in the Sobolev imbedding: 

\begin{theorem}\label{comp-theorem}
Let $(\phi(x,t),g(x,t))$, $0\leq t\leq T$ be a solution to $(RH)_\alpha$ flow and let $\mathbb{W}_\alpha$ be the entropy functional defined in section \ref{prelim}. If $\mu_\alpha$ is the associated functional $\mu_\alpha(g,\phi,\tau)=\inf\limits_f\mathbb{W}_\alpha(g,\phi,\tau, f)$, then 
\[\mu_\alpha(g,\phi,\tau)\geq \frac{\tau D}{3}\ln\left[(4\pi)^{n/2}\tilde{C}_n\right]\]
where $D=\inf_{M\times \{0\}}{S}$ and $\tilde{C}_n=\left(\frac{4K(n,2)}{n}\right)^{\frac{n}{2}}$.
\end{theorem}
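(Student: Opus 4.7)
\emph{Proof plan.} The approach combines a log-Sobolev reformulation of $\mathbb{W}_\alpha$ with the pointwise heat-kernel bound of Corollary~\ref{Sob-cor} and the fact that $\min_M S(\cdot,t)\geq D$ for all $t$, the latter being a consequence of the evolution equation for $S$ derived in~\cite{Muller09} together with $\dot\alpha\leq 0$. First, setting $v:=\sqrt{u}=(4\pi\tau)^{-n/4}e^{-f/2}$ and using the pointwise identity $|\nabla f|^2 u = 4|\nabla v|^2$ together with $\int fu\,dV = -\int u\ln u\,dV - \tfrac{n}{2}\ln(4\pi\tau)$, one rewrites
\begin{equation*}
\mathbb{W}_\alpha(g,\phi,\tau,f)=4\tau\int_M|\nabla v|^2\,dV+\tau\int_M Sv^2\,dV-\int_M v^2\ln v^2\,dV-\tfrac{n}{2}\ln(4\pi\tau)-n,
\end{equation*}
so $\mu_\alpha(g,\phi,\tau)$ is the infimum of this expression over nonnegative $v$ with $\|v\|_{L^2}=1$.

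Next, the bound $H(x,s;y,t)\leq \tilde{C}_n(t-s)^{-n/2}$ of Corollary~\ref{Sob-cor} is, via the Davies--Carlen--Kusuoka--Stroock correspondence, equivalent to the sharp Euclidean-type Sobolev inequality $\|v\|_{L^{2n/(n-2)}}^2\leq K(n,2)\|\nabla v\|_{L^2}^2$ on $(M,g(t))$; and since $\min_M S\geq D$ is preserved by the flow, this holds on every time slice. A single application of Jensen to the $L^{2n/(n-2)}$ norm then produces the one-parameter family of log-Sobolev inequalities
\begin{equation*}
\int_M v^2\ln v^2\,dV\leq \tfrac{n}{2}\ln\!\left(\tfrac{4K(n,2)}{n\sigma}\int_M|\nabla v|^2\,dV\right),\qquad \|v\|_{L^2}=1,\ \sigma>0.
\end{equation*}
Choosing $\sigma$ proportional to $\tau$ so that the gradient term in the Sobolev inequality is absorbed into the $4\tau\int|\nabla v|^2\,dV$ contribution to $\mathbb{W}_\alpha$, using $\tau\int Sv^2\,dV\geq \tau D$, and performing one further Jensen step on the remaining logarithm, the expression rearranges to the claimed lower bound $\mu_\alpha(g,\phi,\tau)\geq \frac{\tau D}{3}\ln[(4\pi)^{n/2}\tilde{C}_n]$.

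\emph{Main obstacle.} The geometric input is entirely contained in Corollary~\ref{Sob-cor} and the flow-invariance of $\inf_M S\geq D$; the delicate part is extracting the precise numerical coefficient $1/3$ in the prefactor $\tau D/3$ via careful constant-tracking in the Sobolev-to-log-Sobolev passage and the optimization over $\sigma$. A secondary caveat is that Corollary~\ref{Sob-cor} presupposes $R(\cdot,0)>\alpha(0)|\nabla\phi(\cdot,0)|^2$, i.e.\ $S(\cdot,0)>0$, so the stated estimate is substantive precisely when $D>0$; for $D\leq 0$ the right-hand side is nonpositive and one would have to appeal to the more intricate general bound of Theorem~\ref{theorem} in place of Corollary~\ref{Sob-cor}.
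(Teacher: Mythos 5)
Your route is genuinely different from the paper's, and it has gaps that I don't think can be patched. The paper's proof of Theorem~\ref{comp-theorem} is essentially a one-line comparison of two heat-kernel bounds that have already been established: Lemma~\ref{conjestimateS} gives
$H(x,t;y,T)\leq e^{B-\tau D/3}(4\pi\tau)^{-n/2}$ with $B=-\inf_{0<\tau\le T}\mu_\alpha(g,\phi,\tau)$, and Corollary~\ref{Sob-cor} gives $H\leq\tilde C_n\tau^{-n/2}$; matching these and solving for $B$ produces the stated lower bound on $\mu_\alpha$. The entire link between the entropy functional and the heat kernel is already packaged inside Lemma~\ref{conjestimateS}, whose proof is a parabolic Davies/Moser iteration in which the monotonicity of $\mathbb W_\alpha$ (Lemma~\ref{basicmu}) is the key input. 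You never invoke Lemma~\ref{conjestimateS}; instead you try to rebuild that link from scratch via a log-Sobolev rewriting of $\mathbb W_\alpha$, a Davies--Carlen--Kusuoka--Stroock equivalence, and Jensen. That is a different decomposition of the problem, not a reorganization of the same one.

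The concrete gaps are these. First, the DCKS correspondence between an $L^1\to L^\infty$ heat-kernel bound and a Euclidean Sobolev inequality is a fixed-metric statement; under the time-dependent metric of $(RH)_\alpha$ it is not an off-the-shelf equivalence, and in the paper the logical arrow runs the other way (Corollary~\ref{Sob-cor} is \emph{derived from} the Zhang-type uniform Sobolev inequality for the flow, not vice versa). Second, the claim that preservation of $\inf_M S\geq D$ yields the bare Euclidean Sobolev inequality ``on every time slice'' is unsupported; what actually holds at later times is the inequality with the extra $\tfrac14\int_M Sv^2$ term, and collapsing that to the Euclidean one requires exactly the $S(\cdot,0)>0$ hypothesis that also enters Corollary~\ref{Sob-cor}. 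Third, and decisively, you flag the factor $1/3$ as something to be recovered by ``careful constant-tracking'' in the Sobolev-to-log-Sobolev passage, but it cannot come from there: in the paper that constant is produced by the time integral $\int_0^T\frac{(T-t)^2}{T}\,dt=\frac{T^2}{3}$ inside the proof of Lemma~\ref{conjestimateS}, i.e.\ it is a time-average over the entire iteration interval, not a feature of any single-slice functional inequality. An argument that never performs that time integration has no mechanism to generate the $1/3$, so the missing step in your plan is not a matter of bookkeeping but of a missing idea — namely Lemma~\ref{conjestimateS} itself.
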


The paper is organized as follows: in section \ref{prelim} we introduce the notation and explain the setting for our problem, while in section \ref{grad-est} we prove some lemmas and proposition needed in the proof of the Harnack inequalities. We continue with section \ref{main-results} which presents the proof of theorem \ref{thm-Harnack} and its corollary. We then present in section \ref{doi} the Sobolev imbedding theorems used in the proof of theorem \ref{theorem}, while its proof, together with the corollary and proof of theorem \ref{comp-theorem}  are presented in section \ref{patru}. 

\subsection{Acknowledgments} Both authors want to express their gratitude to Prof. Xiaodong Cao for fruitful discussions and suggestions about this project. 

\section{Preliminaries}\label{prelim}

We present a review of the basic equations and identities for the $(RH)_\alpha$ flow, together with the more detailed setting of the problem. 

Consider $(M^n,g)$ and $(N^m, \gamma)$ two $n$-dimensional and $m$-dimensional, respectively, manifolds without boundary, which are compact, connected, oriented and smooth. We also let $g(t)$ be a family of Riemannian metrics on $M$, while $\phi(t)$ a family of smooth maps between $M$ and $N$. We assume that $N$ is isometrically embedded into the Euclidean space $\mathbb{R}^d$ (which follows by Nash's embedding theorem) for large enough $d$, so one may write $\phi=(\phi^\mu)_{1\leq\mu\leq d}$.    

For $T>0$, denote with $(g(t),\phi(t)), t\in[0,T]$ a solution to the following coupled system of Ricci flow and harmonic map flow, i.e. the $(RH)_\alpha$ flow,  with coupling time-dependent constant $\alpha(t)$: \begin{equation}\label{RH_flow}
\begin{cases}\frac\partial{\partial t} g(x,t)=-2\Ric(x,t)+2\alpha(t)\nabla\phi(x,t)\otimes\nabla\phi(x,t)\\
\frac{\partial}{\partial t}\phi(x,t)=\tau_g\phi(x,t)
\end{cases}
 \end{equation} 
 
The tensor $\nabla\phi(x,t)\otimes\nabla\phi(x,t)$ has the following expression in local coordinates: $(\nabla\phi\otimes\nabla\phi)_{ij}=\nabla_i\phi^\mu\nabla_j\phi^\mu$ and the energy density of the map $\phi$ is given by $|\nabla\phi|^2=g^{ij}\nabla_i\phi^\mu\nabla_j\phi^\mu$, where we use the convention (from \cite{Muller09}) that repeated Latin indices are summed over from $1$ to $n$, while the Greek are summed from 1 to $d$. All the norms are taken with respect to the metric $g$ at time $t$. 
 
We assume the most general condition for the coupling function $\alpha(t)$, as it appears in \cite{Muller09}: it is a non-increasing function in time, bounded from below by $\bar\alpha>0$, at any time.  
  
We choose a small enough $T>0$, such that a solution to this system exists in $[0,T]$ (R. M\"uller proved the short time existance of the flow in \cite{Muller09}, so we just pick $T<T_\epsilon$, where $T_\epsilon$ is the moment where there is possibly a blowup). 

Following the notation in \cite{Muller09}, it will be easier to introduce these quantities: 
\begin{align*}
\mathcal{S}&:= \Ric-\alpha\nabla\phi\otimes\nabla\phi\\
S_{ij}&:=R_{ij}-\alpha\nabla_i\phi\nabla_j\phi\\
S&:=R-\alpha|\nabla\phi|^2
\end{align*}

\subsection{Heat kernel under $(RH)_\alpha$ flow}

Our proof will focus on obtaining bounds on the heat kernel $H(x,s;y,t)$, which is the fundamental solution of the heat equation
\begin{align}\label{heateqn}
\left(\triangle-\frac\partial{\partial t}\right)u(x,t)=0,\qquad x\in M,~t\in[0,T].
\end{align}
Such heat kernel does indeed exist and it is well defined, as it was shown in \cite{guenther02} by C. Guenther, who studied the fundamental solution of the linear parabolic operator $L(u)=(\triangle-\frac{\partial}{\partial t}-f)u$, on compact n-dimensional manifolds with time dependent metric, where $f$ is a smooth space-time function. She proved the uniqueness, positivity, the adjoint property and the semigroup property of this operator, which thus behaves like the usual heat kernel. As a particular case ($f=0$), she obtained the existence and properties of the heat kernel under any flow of the metric.

Given a linear parabolic operator $L$, its fundamental solution $H(x,s;y,t)$ is a smooth function $H(x,s;y,t):M\times[0,T]\times M\times[0,T]\to\mathbb{R}$, with $s<t$, which satisfies two properties: 
\begin{enumerate}
\item[(i)] $L(H)=0$ in $(y,t)$ for $(y,t)\neq (x,s)$ 
\item[(ii)] $\lim_{t\to s}H(x,s;.,t)=\delta_x$ for every $x$, where $\delta_x$ is the Dirac delta function.
\end{enumerate} 

In our case, $L$ is the heat operator, so $H$ satisfies the heat equation in the $(y,t)$ coordinates \[\triangle_y H(x,s;y,t)-\partial_t H(x,s;y,t)=0\] whereas in the $(x,s)$ it satisfies the adjoint or conjugate heat equation \[\triangle_x H(x,s;y,t)+\partial_s H(x,s;y,t)- [R(x,s)-\alpha|\nabla\phi|^2] H(x,s;y,t)=0\] or \[\triangle_x H(x,s;y,t)+\partial_s H(x,s;y,t)- S(x,s) H(x,s;y,t)=0\] (see \cite{Muller09} for a proof of this fact), where $R(x,s)$ is the scalar curvature, measured with respect to the metric $g(s)$.

We therefore denote with $\Box^*=-\partial_t-\triangle+S$ the adjoint heat operator, adapted to the $(RH_\alpha)$ flow. 

We fix $T$, which is the final time of the flow and we look backwards in time, so we introduce the backward time $\tau=T-t>0$ and we consider another function $h:M\times [0,T)\times M\times[0,T)\to (0,\infty)$ which is defined implicitly as \[H(x,t;y,T)=(4\pi(T-t)^{-n/2})e^{-h}=(4\pi\tau)^{-n/2}e^{-h},\] where $n$ is the dimension of $M$. This function $h$ is the center of our investigation.    

As $t\rightarrow T$, the heat kernel exhibits an asymptotic behavior, as one can see from the following theorem, which was proven for the Ricci flow, but the arguments can be applied verbatim to the $(RH)_\alpha$ flow.   
\begin{theorem}\cite[Theorem 24.21]{chowetc3} \label{asymptoticconj}
For $\tau=T-t$,
\begin{equation*} 
H(x,t;y,T) \sim \frac{e^{-\frac{d_{T}^{2}(x,y)}{4\tau}}}{(4\pi\tau)^{n/2}}\Sigma_{j=0}^{\infty}\tau^{j}u_{j}(x,y,\tau).
\end{equation*}
More precisely, there exists $t_{0}>0$ and a sequence $u_{j}\in C^{\infty}(M\times M\times [0,t_{0}])$ such that,
\begin{equation*}
H(x,t;y,T)-\frac{e^{-\frac{d_{T}^{2}(x,y)}{4\tau}}}{(4\pi\tau)^{n/2}}\Sigma_{j=0}^{k}\tau^{j}u_{j}(x,y,T-l)=w_{k}(x,y,\tau),
\end{equation*} 
with 
\begin{equation*}
u_{0}(x,x,0)=1,
\end{equation*}
and 
\begin{equation*}
w_{k}(x,y,\tau)=O(\tau^{k+1-\frac{n}{2}})
\end{equation*}
as $\tau\rightarrow 0$ uniformly for all $x,y\in M$.
\end{theorem}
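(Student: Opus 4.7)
The plan is to adapt the classical Minakshisundaram–Pleijel parametrix construction to the time-dependent metric setting of the $(RH)_\alpha$ flow. Fix $y \in M$ and seek an approximate heat kernel of the form
\begin{equation*}
K_N(x,t;y,T) = \frac{e^{-d_T^2(x,y)/(4\tau)}}{(4\pi\tau)^{n/2}} \sum_{j=0}^{N} \tau^j u_j(x,y,\tau),
\end{equation*}
where $\tau=T-t$ and the distance $d_T(x,y)$ is measured with respect to the terminal metric $g(T)$. The goal is to choose smooth coefficients $u_j$ so that $(\partial_t - \Delta_{g(t)})K_N$ vanishes to arbitrarily high order as $\tau\to 0$, and then to show the true heat kernel differs from $K_N$ by a remainder of order $O(\tau^{N+1-n/2})$.

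Applying the time-dependent heat operator to this ansatz and matching like powers of $\tau$ produces, at each order, a first-order transport equation along the $g(T)$-geodesics emanating from $y$, of the schematic form
\begin{equation*}
\partial_\tau u_j + \nabla_r u_j + (\text{smooth zero-order coefficient})\, u_j = F_j(u_0,\dots,u_{j-1}),
\end{equation*}
where $r$ is the $g(T)$-distance from $y$. Solving these ODEs along radial geodesics with the natural smoothness requirement at $y$ yields a unique sequence $u_j \in C^\infty(M \times M \times [0,t_0])$; the leading coefficient $u_0$ is the square root of the inverse Jacobian of the $g(T)$-exponential map, which forces $u_0(y,y,0)=1$. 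The key new feature compared to the fixed metric case is that $\partial_t g = -2(\Ric - \alpha\nabla\phi\otimes\nabla\phi)$ produces additional terms when the time derivative hits the Gaussian factor $e^{-d_T^2/4\tau}$ and the volume form implicit in $\Delta_{g(t)}$; under the $(RH)_\alpha$ flow these contributions are smooth and of lower order in $\tau$, and can be absorbed into the source terms $F_j$ using the smoothness of $(g(t),\phi(t))$ guaranteed by M\"uller's short-time existence \cite{Muller09}.

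The remainder $w_N(x,y,\tau) = H(x,t;y,T) - K_N(x,t;y,T)$ is then controlled by Duhamel's principle: it satisfies an inhomogeneous heat equation whose source $(\partial_t - \Delta_{g(t)})K_N$ is, by construction, bounded by $C\tau^{N+1}$ times a Gaussian. Convolving against $H$ itself over the short interval $[T-t_0,T]$ and iterating the resulting Volterra-type integral equation upgrades this into the pointwise estimate $w_N = O(\tau^{N+1-n/2})$, uniformly in $(x,y)\in M\times M$. The main obstacle I expect is the bookkeeping forced by the time-dependence of the metric: the standard derivation assumes the Laplacian and the Gaussian distance refer to a single metric, whereas here one must carefully track how the evolving $g(t)$ interacts with the frozen $g(T)$-distance in the weight, verify that the transport equations for $u_j$ remain solvable and produce smooth coefficients, and confirm that all residual terms retain the correct order of vanishing on the short regularity interval $[T-t_0,T]$ on which the flow is assumed to stay smooth.
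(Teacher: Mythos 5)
The paper does not prove this statement: it is quoted from \cite[Theorem 24.21]{chowetc3} with only the remark that the Ricci-flow argument ``can be applied verbatim'' to the $(RH)_\alpha$ flow. Your parametrix sketch is essentially a reconstruction of that cited argument (a time-dependent Minakshisundaram--Pleijel expansion followed by a Duhamel/Volterra remainder estimate), so it fills in the detail the paper waves at rather than competing with a different proof in the paper. One correction worth making explicit: in the $(x,t)$ variables the kernel $H(x,t;y,T)$ solves the conjugate heat equation $\Box^*H=(-\partial_t-\triangle+S)H=0$, not the forward heat equation you wrote; after setting $\tau=T-t$ your ansatz should be tested against $\partial_\tau-\triangle_{g(t)}+S$, and the potential $S=R-\alpha|\nabla\phi|^2$ then contributes to the smooth zeroth-order coefficient in the transport equations, exactly as you allowed for schematically. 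With that fix, the construction of the $\tau$-dependent coefficients $u_j\in C^\infty(M\times M\times[0,t_0])$, the normalization $u_0(x,x,0)=1$, and the uniform $O(\tau^{k+1-n/2})$ remainder estimate all line up with the cited proof, and your point about tracking the interaction between the evolving $g(t)$ and the frozen $g(T)$-distance in the Gaussian weight is indeed the principal extra bookkeeping relative to the fixed-metric Minakshisundaram--Pleijel construction.
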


\subsection{The Entropy functional}

Next, we recall the $\mathbb{W}_{\alpha}$ entropy functional, as it was defined in \cite{Muller09}, since it will be used in our future proofs. 
\begin{definition}
Along the $(RH)_\alpha$ flow given by (\ref{RH_flow}), one defines the entropy functional restricted to functions $f$ satisfying $\int_{M}(4\pi\tau)^{-n/2}e^{-f}d\mu_{M}=1$ as
\begin{equation}
\mathbb{W}_{\alpha}(g,\phi,\tau,f)=\int_{M}\Big(\tau(|\nabla f|^2+\SS)+(f-n)\Big)(4\pi\tau)^{-n/2}e^{-f}d\mu_{M}.
\end{equation}
There are two more associated functionals, which are defined similarly as follows:
\begin{align}
\mu_{\alpha}(g,\phi,\tau)&=\inf_{f}{\mathbb{W}_{\alpha}(g,\phi,\tau,f)},\\
\upsilon_{\alpha}(g,\phi)&=\inf_{\tau>0}{\mu_{\alpha}(g,\phi,\tau)}.
\end{align}
\end{definition}
\begin{remark} It is trivial to show that these functionals are invariant under diffeomorphisms and scaling:
\begin{align*}
\mathbb{W}_{\alpha}(g,\phi,\tau,f)&=\mathbb{W}_{\alpha}(cg,\phi,c\tau,f),\\
\mu_{\alpha}(g,\phi,\tau)&=\mu_{\alpha}(cg,\phi,c\tau),\\
\upsilon_{\alpha}(g,\phi)&=\upsilon_{\alpha}(cg,\phi).
\end{align*}
\end{remark}

We next present some lemmas whose proofs are identical to the counterpart for the Ricci flow.

\begin{lemma} 
\label{basicmu}
We consider a closed Riemannian manifold $(M,g)$, smooth function $\phi: M\mapsto N$ and $\tau>0$.
 
{\bf a.} Along the flow (\ref{RH_flow}), with $\alpha(t)\equiv \alpha>0$, $\tau(t)>0$, $\frac{d}{dt}\tau=-1$ then $\mathbb{W}_{\alpha}(g,\phi,\tau,f)$ is non-decreasing in time t.

{\bf b.} There exists a smooth minimizer $f_{\tau}$ for $\mathbb{W}_\alpha(g,\phi,\tau,.)$ which satisfies
\[\tau(2\triangle{f_{\tau}}-|\nabla{f_{\tau}}|^2+\SS)+f_{\tau}-n=\mu_\alpha(g,u,\tau).\]
In fact, $\mu_{\alpha}(g,u,\tau)$ is finite.

{\bf c.} Along the flow (\ref{RH_flow}), with $\alpha(t)\equiv \alpha>0$, $\tau(t)>0$, $\frac{d}{dt}\tau=-1$ then $\mu_{\alpha}(g,\phi,\tau)$ is non-decreasing in time t.

{\bf d.} $\lim_{\tau\rightarrow 0^{+}}\mu_{\alpha}(g,u,\tau)=0$.
\end{lemma}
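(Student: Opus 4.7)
\emph{Approach.} The plan is to follow Perelman's original Ricci-flow arguments, adapted to the $(RH)_\alpha$ setting as in \cite{Muller09}, proving the four parts in the order (a), (b), (c), (d), since (c) and (d) rely on the monotonicity of $\mathbb{W}_\alpha$ and on the existence of a smooth minimizer.

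\emph{Part (a).} I would pass to $u = (4\pi\tau)^{-n/2}e^{-f}$ and impose that $u$ solve the conjugate heat equation $\Box^{*} u = 0$, which both preserves the normalization $\int_{M} u\, d\mu = 1$ and yields the evolution $\partial_{t} f = -\Delta f + |\nabla f|^{2} - \SS + n/(2\tau)$. Differentiating $\mathbb{W}_\alpha$ in $t$, substituting the $(RH)_\alpha$ evolution equations for $g$, $\phi$, $R_{ij}$, $|\nabla\phi|^{2}$, and $\SS$ from \cite{Muller09}, and integrating by parts, one expects every first-order term to cancel and leave
\begin{equation*}
\frac{d}{dt}\mathbb{W}_{\alpha} \;=\; \int_{M} 2\tau\Bigl[\bigl|S_{ij} + \nabla_{i}\nabla_{j} f - \tfrac{1}{2\tau}\, g_{ij}\bigr|^{2} + \alpha\bigl|\tau_{g}\phi - \langle\nabla\phi,\nabla f\rangle\bigr|^{2}\Bigr] u\, d\mu \;\geq\; 0,
\end{equation*}
which proves (a). I expect this computation to be the main obstacle: the cross-terms produced by the coupling tensor $\alpha\nabla\phi\otimes\nabla\phi$ in $\partial_{t} g$ and by the tension field $\tau_{g}\phi$ in $\partial_{t}\phi$ must recombine precisely into the second perfect square above, and this requires the $(RH)_\alpha$-Bochner identity (trading $\alpha R_{ij}\nabla_{i}\phi\nabla_{j}\phi$ against $\alpha|\tau_{g}\phi|^{2}$-type contributions) for the signs to line up.

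\emph{Part (b).} I would substitute $w = e^{-f/2}$, so that $|\nabla f|^{2}e^{-f} = 4|\nabla w|^{2}$, turning $\mathbb{W}_{\alpha}$ into
\begin{equation*}
\mathbb{W}_{\alpha} \;=\; (4\pi\tau)^{-n/2}\int_{M}\Bigl[\tau\bigl(4|\nabla w|^{2} + \SS\, w^{2}\bigr) - w^{2}\log w^{2} - \bigl(n + \tfrac{n}{2}\log(4\pi\tau)\bigr) w^{2}\Bigr] d\mu
\end{equation*}
under the constraint $(4\pi\tau)^{-n/2}\int_{M} w^{2}\,d\mu = 1$. The quadratic $|\nabla w|^{2}$ term is coercive on $H^{1}(M)$, while $-w^{2}\log w^{2}$ is controlled by $\|w\|_{L^{p}}^{2}$ for any $p>2$ through the compact Sobolev embedding on the closed manifold $M$, uniformly on the constraint set. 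Direct methods then give a nonnegative minimizer $w_{\tau}$; the corresponding Euler--Lagrange equation is semilinear elliptic, so elliptic regularity and the strong maximum principle upgrade $w_{\tau}$ to a smooth, strictly positive function. Setting $f_{\tau} = -\log w_{\tau}^{2} - \tfrac{n}{2}\log(4\pi\tau)$ and computing the first variation yields the stated identity $\tau(2\Delta f_{\tau} - |\nabla f_{\tau}|^{2} + \SS) + f_{\tau} - n = \mu_{\alpha}(g,\phi,\tau)$, and finiteness of $\mu_{\alpha}$ follows from the same coercivity bound.

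\emph{Parts (c) and (d).} For (c), given $t_{0} < t_{1}$, take the minimizer $f_{t_{1}}$ from (b), set $u(t_{1}) = (4\pi\tau_{1})^{-n/2}e^{-f_{t_{1}}}$, and extend $u$ backwards in time on $[t_{0},t_{1}]$ by solving $\Box^{*} u = 0$; then by (a), $\mu_{\alpha}(t_{0}) \le \mathbb{W}_{\alpha}(t_{0},f(t_{0})) \le \mathbb{W}_{\alpha}(t_{1},f_{t_{1}}) = \mu_{\alpha}(t_{1})$. For (d), test functions built from the heat-kernel asymptotics in Theorem~\ref{asymptoticconj}, namely $(4\pi\tau)^{-n/2}e^{-f_{\tau}}$ concentrating at a point $p\in M$ like the Euclidean Gaussian, give $\limsup_{\tau\to 0^{+}}\mu_{\alpha}(g,\phi,\tau) \le 0$; conversely, rescaling the metric by $1/\sqrt{\tau}$ on a small ball (on which it becomes almost Euclidean) and invoking the sharp logarithmic Sobolev inequality on $\mathbb{R}^{n}$, whose optimal value is $0$ for the corresponding normalized entropy, yields $\liminf_{\tau\to 0^{+}}\mu_{\alpha}(g,\phi,\tau) \ge 0$.
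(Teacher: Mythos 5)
Your proposal is correct and follows essentially the same route as the paper, which for parts (a)--(c) simply cites M\"uller's Propositions~7.1 and~7.2 (whose Perelman-style proofs you reconstruct: pass to a conjugate-heat-equation solution and show $\ddt\mathbb{W}_\alpha$ is a sum of squares; Rothaus-type substitution $w=e^{-f/2}$ plus direct methods and elliptic regularity for the minimizer), and for part (d) gives the same sketch via scaling invariance, concentrating test functions, and a blow-up argument against Gross's logarithmic Sobolev inequality on $\mathbb{R}^n$. Two minor slips that do not affect the substance: in (b) your formula $w=e^{-f/2}$ is inconsistent with the later $f_\tau=-\log w_\tau^2-\tfrac{n}{2}\log(4\pi\tau)$ (so the $\tfrac{n}{2}\log(4\pi\tau)w^2$ term in your displayed $\mathbb{W}_\alpha$ belongs only to the second normalization), and in (d) the blow-up rescales the metric by $1/\tau$, not $1/\sqrt{\tau}$, since $\mathbb{W}_\alpha(g,\phi,\tau,f)=\mathbb{W}_\alpha(\tfrac{1}{\tau}g,\phi,1,f)$.
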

\begin{proof}
Part (a), (b,c) follow from \cite[Props 7.1, 7.2]{Muller09} respectively.

The proof of part (d) is almost identical to that of \cite[Prop 3.2]{stw03notes} (also \cite[Prop 17.19, 17.20]{chowetc3}) so we give a brief argument here.

First, by the scaling invariance, 
\[\mathbb{W}_{\alpha}(g,\phi, \tau, f)=\mathbb{W}_{\alpha}\left(\frac{1}{\tau}g,\phi, 1, f\right).\]

In addition, we have $\SS_{\frac{1}{\tau}g}=\tau\SS_{g}$ and we can construct a test function such that
\[ \lim_{\tau\rightarrow 0} \mathbb{W}_\alpha(g(\tau),\phi(\tau), \tau, f(\tau))=0. \]

Finally, the equality follows from a contradiction with the Gross\rq{}s logarithmic Sobolev inequality on an Euclidean space using a blow-up argument as
$\frac{1}{\tau}g$ converges to the Euclidean metric.
\end{proof}

Let's now note an identity that is essential for our future computations:
\begin{lemma}
 Along the flow (\ref{RH_flow_intro}), with $\alpha(t)\geq0$ and non-increasing, we have,
\begin{equation}
\label{evolS}
\frac{\partial}{\partial t}\SS =\triangle \SS+2\alpha|\tau_g\phi|^2+2|\mathcal{S}_{ij}|^2-\alpha'(t)|\nabla\phi|^2.
\end{equation}
\end{lemma}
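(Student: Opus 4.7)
The plan is to split $\SS = R - \alpha|\nabla\phi|^2$ and compute the evolutions of $R$ and $|\nabla\phi|^2$ separately, then combine, using that $\alpha$ depends only on time to produce the $-\alpha'(t)|\nabla\phi|^2$ term via the product rule:
\[
\frac{\partial}{\partial t}\SS = \frac{\partial R}{\partial t} - \alpha'(t)|\nabla\phi|^2 - \alpha(t)\frac{\partial}{\partial t}|\nabla\phi|^2.
\]
So everything reduces to computing the two evolutions under the coupled flow.

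For $\partial_t R$, I would apply the standard first-variation formula for the scalar curvature under an arbitrary symmetric deformation $\partial_t g_{ij}=\sigma_{ij}$, namely
\[
\frac{\partial R}{\partial t} = -\triangle\trace(\sigma) + \nabla^i\nabla^j\sigma_{ij} - \langle\sigma,\Rc\rangle,
\]
with $\sigma_{ij}=-2\mathcal{S}_{ij}$. The contracted second Bianchi identity $\nabla^i R_{ij}=\tfrac{1}{2}\nabla_j R$ handles the pure Ricci part and produces $\triangle R + 2|\Rc|^2$. The cross piece $2\alpha\,\nabla^i\nabla^j(\nabla_i\phi^\mu\nabla_j\phi^\mu)$ must be expanded by commuting covariant derivatives (introducing a Ricci-curvature term) and using the harmonic-map-flow equation $\partial_t\phi^\mu = \tau_g\phi^\mu$.

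For $\partial_t|\nabla\phi|^2$, I would write $|\nabla\phi|^2 = g^{ij}\nabla_i\phi^\mu\nabla_j\phi^\mu$ and differentiate, using $\partial_t g^{ij}=2\mathcal{S}^{ij}$ and $\partial_t(\nabla_i\phi^\mu) = \nabla_i(\tau_g\phi)^\mu$. The Bochner identity (applied componentwise through the Nash embedding) converts the gradient-of-tension term into $\triangle|\nabla\phi|^2$ modulo a $-2|\Hs\phi|^2$ Hessian term and a $2\Rc(\nabla\phi,\nabla\phi)$ curvature term.

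Assembling: the divergence-of-divergence contributions reorganize into $\triangle R - \alpha\triangle|\nabla\phi|^2 = \triangle\SS$ (since $\alpha$ is spatially constant); the pure-curvature quadratic $|\Rc|^2$, the mixed term $-2\alpha R_{ij}\nabla^i\phi\nabla^j\phi$ and the $\alpha^2|\nabla\phi\otimes\nabla\phi|^2$ term reassemble into the perfect square $2|\mathcal{S}_{ij}|^2$; the remaining $\langle\nabla\phi,\nabla\tau_g\phi\rangle$-type pieces, once combined with the Hessian term coming from Bochner, collapse into $2\alpha|\tau_g\phi|^2$; and the explicit $\partial_t\alpha$ produces $-\alpha'(t)|\nabla\phi|^2$. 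The main obstacle is the bookkeeping — specifically, verifying that the $\alpha$-dependent cross terms reorganize exactly into the perfect square $|\mathcal{S}_{ij}|^2$ and that the gradient-of-tension contributions combine with the Hessian-squared term to yield $+2\alpha|\tau_g\phi|^2$ with no residual Hessian. This is essentially the computation in Müller \cite{Muller09} (where $\alpha$ is constant), extended here to allow a non-increasing $\alpha(t)$, which only contributes the extra explicit $-\alpha'(t)|\nabla\phi|^2$ summand.
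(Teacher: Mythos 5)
Your plan is correct and is essentially the computation behind M\"uller's Theorem~4.4, which is exactly the reference the paper invokes for this lemma (the paper gives no independent derivation). One small correction to your closing remark: M\"uller's Theorem~4.4 already treats a time-dependent, non-increasing $\alpha(t)$ and already contains the $-\alpha'(t)|\nabla\phi|^2$ summand, so there is no new extension needed beyond the cited reference.
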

\begin{proof}
See \cite[Theorem 4.4]{Muller09}.
\end{proof}

\subsection{The $\mathcal{L}_\phi$-length of a curve}
\begin{definition}
Given $\tau(t)=T-t$, define the $\mathcal{L}_{\phi}$-length of a curve $\gamma: [\tau_{0},\tau_{1}]\mapsto M$, $[\tau_{0},\tau_{1}]\subset[0,T]$ by 
\begin{equation}
\mathcal{L}_{\phi}(\gamma):=\int_{\tau_{0}}^{\tau_{1}}\sqrt{\tau}\left(S(\gamma(\tau))+|\dot{\gamma}(\tau)|^2\right)d\tau.
\end{equation}
For a fixed point $y\in M$ and $\tau_{0}=0$, the backward reduced distance is defined as 
\begin{equation}
\label{adaptedRD}
\ell_{\phi}(x,\tau_{1}):=\inf_{\gamma\in \Gamma}\left\{\frac{1}{2\tau_{1}}\mathcal{L}_{\phi}(\gamma)\right\},
\end{equation}
where $\Gamma=\{\gamma:[0,\tau_{1}]\mapsto M, \gamma(0)=y, \gamma(\tau_{1})=x\}$.\\
Finally, the backward reduced volume is defined as:  
\begin{equation}
V_{\phi}(\tau):=\int_{M}(4\pi\tau)^{-n/2}e^{-\ell_{\phi}(y,\tau)}d\mu_{\tau}(y).
\end{equation}
\end{definition}

We conclude this section with a technical lemma, that will prove a useful bound for the heat kernel in terms of the reduced distance. We will only sketch the proof, as the arguments are standard. 

\begin{lemma}
\label{compareRDandconjheat}
Define $L_{\phi}(x,\tau)=4\tau\ell_{\phi}(x,\tau)$. Then the following hold: \\
{\bf a.} Assume that there are $k_{1},k_{2}\geq 0$ such that $-k_{1}g(t)\leq \mathcal{S}(t) \leq k_{2}g(t)$ for $t\in[0,T]$. Then $L_{\phi}$ is smooth almost everywhere and a locally Lipschitz function on $M\times [0,T]$. Moreover, 
$$e^{-2k_{1}\tau}d_{T}^2(x,y)-\frac{4k_{1}n}{3}\tau^2\leq L_{\phi}(x,\tau)\leq e^{2k_{2}\tau}d_{T}^2(x,y)+\frac{4k_{2}n}{3}\tau^2 .$$
{\bf b.} $\Box^{\ast}\Big(\frac{e^{-\frac{L_{\phi}(x,\tau)}{4\tau}}}{(4\pi\tau)^{n/2}}\Big)\leq 0.$\\
{\bf c.} $H(x,t;y,T)=(4\pi\tau)^{-n/2}e^{-h}$ then $h(x,t;y,T)\leq \ell_{\phi}(x,T-t)$ . 
\end{lemma}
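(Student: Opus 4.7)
The plan is to handle (a), (b), (c) in order, adapting Perelman's reduced-distance machinery to the $(RH)_\alpha$ setting: part (a) supplies the Gaussian asymptotics at $\tau\to 0$, part (b) is the $(RH)_\alpha$-analog of Perelman's differential Harnack for $\ell$, and part (c) combines them via a maximum-principle-style integration.

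\textbf{Part (a).} The key inputs are a metric comparison and a trace bound on $\mathcal{S}$. From $\partial_\tau g = 2\mathcal{S}$ and $-k_1 g \le \mathcal{S} \le k_2 g$, integration yields the pointwise comparison $e^{-2k_1\tau}g(T) \le g(\tau) \le e^{2k_2\tau}g(T)$, and tracing gives $-nk_1 \le S \le nk_2$. For the upper bound I would substitute into $\mathcal{L}_\phi$ the reparametrization of the $g(T)$-minimizing geodesic from $y$ to $x$ which realizes $\int_0^{\tau_1}\sqrt{\tau}\,|\dot\gamma|^2_{g(T)}\,d\tau = d_T^2(x,y)/(2\sqrt{\tau_1})$ (this is the standard $\sigma=2\sqrt{\tau}$ parametrization), use the bounds $|\dot\gamma|^2_{g(\tau)} \le e^{2k_2\tau_1}|\dot\gamma|^2_{g(T)}$ and $S\le nk_2$, and observe that $L_\phi = 2\sqrt{\tau_1}\,\mathcal{L}_{\phi,\min}$ to produce the $\tfrac{4k_2n}{3}\tau_1^2$ error. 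For the lower bound I would apply the reverse metric inequality $|\dot\gamma|^2_{g(\tau)} \ge e^{-2k_1\tau_1}|\dot\gamma|^2_{g(T)}$ to an arbitrary competitor $\gamma$ from $y$ to $x$, combine with $S\ge -nk_1$ and the weighted Cauchy--Schwarz estimate
\[
d_T^2(x,y) \le \Bigl(\int_0^{\tau_1}|\dot\gamma|_{g(T)}\,d\tau\Bigr)^2 \le 2\sqrt{\tau_1}\int_0^{\tau_1}\sqrt{\tau}\,|\dot\gamma|^2_{g(T)}\,d\tau,
\]
and then take the infimum over $\gamma$. Local Lipschitz regularity and a.e.\ smoothness of $L_\phi$ follow once the $\mathcal{L}_\phi$-exponential map is set up, exactly as in the Ricci-flow treatment.

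\textbf{Part (b).} Setting $\tilde H := (4\pi\tau)^{-n/2}\,e^{-\ell_\phi} = (4\pi\tau)^{-n/2}\,e^{-L_\phi/(4\tau)}$, a direct computation in $\tau = T-t$ gives
\[
\Box^{\ast}\tilde H = \Bigl(\Delta\ell_\phi - \partial_\tau\ell_\phi - |\nabla\ell_\phi|^2 - \tfrac{n}{2\tau} + S\Bigr)\,\tilde H,
\]
so the claim $\Box^{\ast}\tilde H \le 0$ reduces to the $(RH)_\alpha$-analog of Perelman's reduced-distance inequality
\[
\partial_\tau\ell_\phi - \Delta\ell_\phi + |\nabla\ell_\phi|^2 - S + \tfrac{n}{2\tau} \ge 0
\]
(understood in the barrier / distributional sense). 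I would derive this from the first and second variations of $\mathcal{L}_\phi$ along $\mathcal{L}_\phi$-geodesics, with the evolution equation \eqref{evolS} for $\SS$ playing the role that $\partial_t R = \Delta R + 2|\Ric|^2$ plays in the Ricci-flow proof. The hard part is the bookkeeping: one must check that the positive terms $2\alpha|\tau_g\phi|^2 + 2|\mathcal{S}_{ij}|^2$ together with the sign hypothesis $\alpha' \le 0$ combine to keep the resulting traced Hamilton-type quadratic non-negative, which is where the $(RH)_\alpha$ structure is genuinely used. This calculation is carried out in \cite{Muller09}, which I would cite and paraphrase.

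\textbf{Part (c).} For any non-negative $\psi_0$ on $M$ let $\psi(x,s)$ solve $\partial_s\psi = \Delta\psi$ on $[s_0,T]$ with $\psi(\cdot,s_0) = \psi_0$; by the maximum principle $\psi \ge 0$. Using $\partial_s\,d\mu_s = -S\,d\mu_s$, $\Box^{\ast}H = 0$, part (b), and integration by parts,
\[
\frac{d}{ds}\int_M H\,\psi\,d\mu_s = 0, \qquad \frac{d}{ds}\int_M \tilde H\,\psi\,d\mu_s \ge 0.
\]
Theorem~\ref{asymptoticconj} gives $\lim_{s\to T^{-}}\int_M H\,\psi\,d\mu_s = \psi(y,T)$, and by part (a) the function $\tilde H$ behaves like the Euclidean Gaussian centered at $y$ as $\tau\to 0$, so $\lim_{s\to T^{-}}\int_M \tilde H\,\psi\,d\mu_s = \psi(y,T)$ as well. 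Hence
\[
\int_M \tilde H(x,s_0;y,T)\,\psi_0(x)\,d\mu_{s_0} \le \psi(y,T) = \int_M H(x,s_0;y,T)\,\psi_0(x)\,d\mu_{s_0},
\]
and varying $\psi_0 \ge 0$ yields $\tilde H \le H$ pointwise, i.e.\ $h(x,t;y,T) \le \ell_\phi(x,T-t)$, as required.
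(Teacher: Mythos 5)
Your proposal follows essentially the same route as the paper: part (a) via metric comparison and the $\sigma=2\sqrt{\tau}$ parametrization, part (b) via the $(RH)_\alpha$ analogue of the reduced-distance differential inequality, and part (c) by a maximum-principle comparison of $H$ with $(4\pi\tau)^{-n/2}e^{-\ell_\phi}$ using the common delta-function limit. The one place the paper supplies genuinely new content rather than citing is the positivity check in part (b): it invokes M\"uller's general-flow result (from \cite{Muller10}, not \cite{Muller09}) reducing $\Box^{\ast}\bigl((4\pi\tau)^{-n/2}e^{-L_\phi/4\tau}\bigr)\le 0$ to the non-negativity of $\mathcal{D}(\mathcal{S},X)=\partial_t S-\triangle S-2|\mathcal{S}|^2+4(\nabla_i\mathcal{S}_{ij})X_j-2(\nabla_j S)X_j+2(\Rc-\mathcal{S})(X,X)$, and then verifies $\mathcal{D}\ge 0$ directly. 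Your bookkeeping there is slightly off: the $2|\mathcal{S}_{ij}|^2$ produced by \eqref{evolS} cancels exactly against the $-2|\mathcal{S}|^2$ in the definition of $\mathcal{D}$, so it does not contribute positivity; what survives, after applying the Bianchi-type identity $4(\nabla_i S_{ij})X_j-2(\nabla_j S)X_j=-4\alpha\,\tau_g\phi\,\nabla_X\phi$ and noting $(\Rc-\mathcal{S})=\alpha\,\nabla\phi\otimes\nabla\phi$, is the perfect square $2\alpha|\tau_g\phi-\nabla_X\phi|^2-\alpha'(t)|\nabla\phi|^2\ge 0$, with the sign coming from $\alpha>0$ and $\alpha'\le 0$. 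That algebraic cancellation and the appearance of the square are the content you should supply rather than defer to a citation; otherwise the argument is sound, and your part (c) is a somewhat more detailed rendering of the same maximum-principle-plus-Gaussian-limit idea that the paper sketches with a reference to Chow et al.
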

\begin{proof}
{\bf a.} This is a direct consequence of  \cite[Lemma 4.1]{Muller10} for general flows.\\
{\bf b.} The result follows from \cite[Lemma 5.15]{Muller10}, where the key of the proof is given by the non-negativity of the quantity,
\begin{equation*}
\mathcal{D}(\mathcal{S},X)=\partial_{t}S-\triangle{S}-2|\mathcal{S}|^2+4(\nabla_{i}\mathcal{S}_{ij})X_{j}-2(\nabla_{j}S)X_{j}+2(\text{Rc}-\mathcal{S})(X,X).
\end{equation*} 
In our case, applying (\ref{evolS}) and the identity $4(\nabla_iS_{ij})X_j-2(\nabla_jS)X_j=-4\alpha\tau_g\phi\nabla_j\phi X_j$ (a generalized second Bianchi identity) yields
\begin{align*}
\mathcal{D}(\mathcal{S},X)&=2\alpha|\tau_g\phi(x,t)|^2+4\nabla^{i}S_{ij}X^{j}-2\nabla_{j}SX^{j}+2\alpha\nabla_i\phi\nabla_j\phi X^iX^j-\alpha'(t)|\nabla\phi|^2\\
&=2\alpha|\tau_g\phi-\nabla_X\phi|^2-\alpha'(t)|\nabla\phi|^2 \geq 0.
\end{align*}
assuming that $\alpha(t)>0$ is non-increasing. 

{\bf c.} A detailed argument for this inequality can be found in \cite[Lemma 16.49]{chowetc2}. First observe that part a) implies $\lim_{\tau\rightarrow 0}L_{\phi}(x,\tau) =d_{T}^2(y,x)$ and, hence,
\[\lim_{\tau\rightarrow 0}\frac{e^{-\frac{L_{\phi}(x,\tau)}{4\tau}}}{(4\pi\tau)^{n/2}}=\delta_{y}(x),\]
since locally a Riemannian manifold looks like the Euclidean space. Using part b) and the maximum principle one obtains that 
\begin{equation*}
H(x,t;y,T)\geq \frac{e^{-\frac{L_{\phi}(x,\tau)}{4\tau}}}{(4\pi\tau)^{n/2}}=\frac{e^{-\frac{L_{\phi}(x,T-t)}{4\tau}}}{(4\pi(T-t))^{n/2}}.
\end{equation*}
Finally, one concludes that
\begin{equation*}
h(x,t;y,T)\leq \frac{L_{\phi}(x,\tau)}{4\tau}=\ell_{\phi}(x,\tau)=\ell_{\phi}(x,T-l).
\end{equation*}
\end{proof}

\section{Heat kernel and gradient estimates}\label{grad-est}

Having presented the background of our problem and introduced the notation, we are now ready to prove some results that will lead to the proof of theorem \ref{thm-Harnack}.  

First we deduce a general estimate on the heat kernel, inspired by the proof in the Ricci flow case in \cite{cz11}. 

\begin{lemma}
\label{conjestimateS}
Let $B =-\inf\limits_{0<\tau\leq T} \mu_{\alpha}(g,\phi,\tau)$  ($B$ is well-defined as proven in \cite{Muller09}) and $D =\min\{0,\inf_{M\times \{0\}}{S}\}$. Then the following inequality holds $$H(x,t;y,T) \leq e^{B-(T-t)D/3}(4\pi (T-t))^{-n/2}.$$
\end{lemma}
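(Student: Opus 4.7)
The plan is to prove the equivalent pointwise lower bound $h(x,t;y,T)\geq -B+\tfrac{(T-t)D}{3}$, where $h$ is defined by $H=(4\pi\tau)^{-n/2}e^{-h}$ with $\tau=T-t$. The approach combines Perelman-type monotonicity of the entropy $\mathcal{W}_\alpha$ along the heat kernel with the pointwise curvature lower bound $S\geq D$ and the LYH-type Harnack from Corollary \ref{cor-LYH}.

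I would first establish that, evaluated on the heat kernel, $\mathcal{W}_\alpha(g(t),\phi(t),\tau,h)\in[-B,0]$. Because the conjugate heat equation under $(RH)_\alpha$ preserves $\int_M H\,d\mu_t=1$, the function $h$ is an admissible test function for the entropy, so $\mathcal{W}_\alpha(h)\geq \mu_\alpha(\tau)\geq -B$. The upper bound $\mathcal{W}_\alpha(h)\leq 0$ follows from the Perelman-type differential monotonicity $\frac{d}{dt}\mathcal{W}_\alpha(g(t),\phi(t),\tau(t),h(t))\geq 0$ along the conjugate heat solution, combined with the Gaussian-limit computation $\lim_{t\to T^-}\mathcal{W}_\alpha(h)=0$ (analogous to Lemma \ref{basicmu}(d), using that $H\to\delta_y$). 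The scalar-curvature lower bound $S\geq D$ comes from applying the maximum principle to \eqref{evolS}, since the additional terms $2\alpha|\tau_g\phi|^2+2|\mathcal{S}_{ij}|^2-\alpha'(t)|\nabla\phi|^2$ are non-negative when $\alpha$ is non-increasing.

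Combining $\mathcal{W}_\alpha(h)\leq 0$ with $S\geq D$ and dropping the non-negative $\tau\int|\nabla h|^2 H\,d\mu$ term yields the averaged bound $\int_M hH\,d\mu\leq n-\tau D$. To convert this into a pointwise lower bound on $h$, I would exploit Corollary \ref{cor-LYH}, whose $\sqrt{\tau}$-weighted integration $\int_0^\tau\sqrt{s}\,ds=\tfrac{2}{3}\tau^{3/2}$ produces exactly the $1/3$ factor appearing in the statement. Paired with the maximum principle on $\log H$ at its spatial maximum, which gives $\tfrac{d}{d\tau}\log\sup_x H\leq -D$, this should yield the desired pointwise bound of the form $e^{B-(T-t)D/3}(4\pi(T-t))^{-n/2}$.

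The hardest part will be precisely this final conversion from integral to pointwise control. The maximum principle alone is insufficient because it produces a divergent $\tfrac{n}{2}\log(\tau/\tau_0)$ term as $\tau_0\to 0^+$ coming from the Gaussian prefactor $(4\pi\tau)^{-n/2}$, while the entropy bound only controls $\int H\log H\,d\mu$ rather than $\sup H$ directly. The delicate cancellation of this divergence while preserving the exact coefficient $1/3$ demands careful bookkeeping, and will likely parallel the Ricci-flow argument of \cite{cz11} adapted to the $(RH)_\alpha$ setting.
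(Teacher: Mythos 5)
Your proposal takes a genuinely different route from the paper, but it has two fatal problems: a circularity and a sign mismatch.

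\textbf{Circularity.} You invoke Corollary \ref{cor-LYH} to convert the integral entropy bound into a pointwise bound on $h$. But in the paper's logical structure, Corollary \ref{cor-LYH} is a consequence of Theorem \ref{thm-Harnack}, whose proof (via Proposition \ref{limit0}) explicitly relies on the very Lemma \ref{conjestimateS} you are trying to prove. Unless you give an independent derivation of the LYH inequality (say via the $\ell_\phi$-reduced-distance machinery in Lemma \ref{compareRDandconjheat}), your argument is circular.

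\textbf{Wrong direction.} Even setting circularity aside, the LYH inequality runs the wrong way for this lemma. Integrating $\partial_\tau(2\sqrt{\tau}\,h)\le \sqrt{\tau}(S+|\dot\gamma|^2)$ yields an \emph{upper} bound on $h$, i.e.\ a \emph{lower} bound on $H$. What Lemma \ref{conjestimateS} asserts is an \emph{upper} bound on $H$, equivalently $h\ge -B+\tau D/3$, a \emph{lower} bound on $h$. Neither the LYH estimate nor the reduced-distance comparison $h\le\ell_\phi$ can deliver a lower bound on $h$, so the final step of your plan cannot close. Your entropy observations $\mathcal{W}_\alpha(h)\in[-B,0]$, and the $S\ge D$ maximum-principle bound, are correct and are in fact ingredients of the argument, but they only control $\int hH\,d\mu$; you correctly identify that the passage to a pointwise statement is the crux, and you have no working mechanism for it. Your heuristic that $\int_0^\tau\sqrt{s}\,ds=\tfrac{2}{3}\tau^{3/2}$ explains the $1/3$ is also not correct on dimensional grounds; in the paper the $1/3$ arises from $\int_0^T (T-t)^2/T\,dt=T^2/3$.

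\textbf{What the paper actually does.} The paper proves an $L^1\!\to\!L^\infty$ smoothing estimate for the forward heat semigroup and deduces the kernel bound by duality. Concretely, for a positive solution $\Phi$ of the heat equation one sets $p(l)=T/(T-l)$, computes $\partial_t\log\|\Phi\|_{L^p}$ by integration by parts, and recognizes the dominant term as $-p'p^{-2}\mathbb{W}_\alpha\bigl(g,\phi,\tfrac{p-1}{p'},h\bigr)$; monotonicity of $\mu_\alpha$ (Lemma \ref{basicmu}) bounds this by $B/T$, and the $S\ge D$ bound handles the remaining $((p-1)p^{-2}-p^{-1})\int Sv^2$ term, whose time integral produces the $D/3$. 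Integrating $p$ from $1$ to $\infty$ over $[0,T]$ gives $\|\Phi(\cdot,T)\|_{L^\infty}\le e^{B-TD/3}(4\pi T)^{-n/2}\|\Phi(\cdot,0)\|_{L^1}$, and plugging in the reproducing formula \eqref{heatbyheatkernel} yields the kernel bound. The entropy enters through this $L^p$-interpolation, not through a pointwise analysis of $h$; this sidesteps entirely the integral-to-pointwise conversion that is the missing step in your proposal.
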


\begin{proof}
We may assume without loss of generality that $t=0$. Denote with $\Phi(y,t)$ a positive solution to the heat equation along the $(RH)_\alpha$ flow. First, we obtain an upper bound for the $L^{\infty}$-norm of $\Phi(.,T)$ in terms of $L^{1}$-norm of $\Phi(.,0)$.\\
Set $p(l)=\frac{T}{T-l}=\frac{T}{\tau}$ then $p(0)=1$ and $\lim_{l\rightarrow T}p(l)=\infty$. For $A=\sqrt{\int_{M}\Phi^{p}d\mu}$, $v=A^{-1}\Phi^{p/2}$ and $ \nabla \Phi\nabla (v^2\Phi^{-1})=(p-1)p^{-2}4|\nabla v|^2$, integration by parts (IBP) yields 
\begin{align*}
\partial_{t}(\ln{||\Phi||_{L^{p}}})&=-p'p^{-2}\ln(\int_{M}\Phi^{p}d\mu)+(p\int_{M}\Phi^pd\mu)^{-1}\partial_{t}(\int_{M}\Phi^{p}d\mu)
\\&
=-p'p^{-2}\ln(\int_{M}\Phi^{p}d\mu)+(p\int_{M}\Phi^pd\mu)^{-1}\Big(\int_{M}\Phi^{p}(p\Phi^{-1}\Phi'+p'\ln{\Phi}-S)d\mu\Big)
\\&
=-p'p^{-2}\ln(A^2)+p^{-1}A^{-2}\Big(\int_{M}A^2v^2(p\Phi^{-1}\Phi'+p'\frac{2}{p}\ln{(Av)}-S)d\mu\Big)  
\\&
=\int_{M}v^2\Phi^{-1}\triangle \Phi d\mu+p'p^{-2}\int v^2\ln{v^2}-p^{-1}\int_{M} Sv^2d\mu
\\&
=p'p^{-2}\int_{M} v^2\ln{v^2}d\mu-(p-1)p^{-2}\int_{M}4|\nabla v|^2 d\mu-p^{-1}\int_{M} Sv^2d\mu
\\&
=p'p^{-2}\Big(\int_{M}v^2\ln{v^2}d\mu-\frac{p-1}{p'}\int_{M}4|\nabla v|^2d\mu-\frac{p-1}{p'}\int_{M}Sv^2d\nu\Big)
\\&
+((p-1)p^{-2}-p^{-1})\int_{M}Sv^2d\mu.
\end{align*} 
Setting $v^2=(4\pi\tau)^{-n/2}e^{-h}$ then the first term becomes 
$$-p'p^{-2}\mathbb{W}_{\alpha}(g,u,\frac{p-1}{p'},h)-n-\frac{n}{2}\ln({4\pi\frac{p-1}{p'}}).$$
Notice that
\[ p'p^{-2}=\frac{1}{T},  \frac{p-1}{p'}=\frac{l(T-l)}{T},\mbox{ and } (p-1)p^{-2}-p^{-1}=-\frac{(T-l)^2}{T^2}.\]
For $0<t_{0}<T$, $\tau(t_{0})=\frac{t_{0}(T-t_{0})}{T}$ and $\frac{d}{dt}{\tau}=-1$ then $0<\tau(0)=\frac{t_{0}(2T-t_{0})}{T}<T$.
Using Lemma \ref{basicmu}, we find that
$$-p'p^{-2}\mathbb{W}_{\alpha}(g(l),u,\frac{p-1}{p'},h)\leq -\frac{1}{T}\mathbb{W}_{\alpha}(g(0),u,\tau(0),h)\leq -\frac{1}{T}\inf_{0<\tau\leq T} \mu_{\alpha}(g(0),\tau)=\frac{B}{T}.$$
Therefore $$T\partial_{t}(\ln{|\Phi||_{L^{p}}})\leq B-n-\frac{n}{2}\ln{(4\pi\frac{t(T-t)}{T})}-\frac{(T-t)^2}{T} D,$$ since, by (\ref{evolS}), the minimum of S is nondecreasing along the flow.
By integrating the above inequality one obtains 
$$T\ln{\frac{||\Phi(.,T)||_{L^{\infty}}}{||\Phi(.,0)||_{L^{1}}}}\leq T(B-n-\frac{n}{2}(\ln{(4\pi T)}-2))-\frac{T^{2}}{3}D.$$
Then
$$||\Phi(.,T)||_{L^{\infty}}\leq e^{B-TD/3}(4\pi T)^{-n/2}||\Phi(.,0)||_{L^{1}}.$$
By the definition of the heat kernel: 
\begin{equation}
\label{heatbyheatkernel}
\Phi(y,T)=\int_{M}H(x,0,y,T)\Phi(x,0)d\mu_{g(0)}(x),
\end{equation}
so, together with the fact that the above inequality holds for any arbitrary positive solution to the heat equation, we obtain 
$$H(x,0,y,T) \leq e^{B-TD/3}(4\pi T)^{-n/2}.$$
\end{proof}

The next result is a gradient estimate for the solution of the adapted conjugate heat equation.

\begin{lemma}
\label{gradconjS}Assume there exist $k_{1},k_{2},k_{3}, k_4>0$ such that the followings hold on $M\times[0,T]$, 
\begin{align*}
-\Rc (g(t)) &\leq k_{1}g(t),\\
-\mathcal{S} &\leq k_{2} g(t),\\
|\nabla{\SS}|^2 &\leq k_{3},\\
|\SS| & \leq k_{4}. 
\end{align*}
Let q be any positive solution to the equation $\Box^{\ast}q=0$ on $M\times [0,T]$, and $\tau=T-t$. If $q<A$ then there exist $C_{1},C_{2}$ depending on $k_{1},k_{2},k_{3}, k_{4}$ and n such that for $0<\tau\leq \min\{1,T\}$, we have
\begin{equation}
\tau \frac{|\nabla{q}|^2}{q^2}\leq (1+C_{1}\tau)(\ln{\frac{A}{q}}+C_{2}\tau).
\end{equation}
\end{lemma}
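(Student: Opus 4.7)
The plan is to set $f:=\ln(A/q)$ --- positive because $q<A$ --- and recast $\Box^{\ast}q=0$ as an evolution equation for $f$. Writing $q=Ae^{-f}$ and substituting gives $\nabla q=-q\nabla f$, $\Delta q=q(|\nabla f|^2-\Delta f)$ and $\partial_t q=-q\partial_t f$, so that $\Box^{\ast}q=0$ becomes
\[ (\partial_t+\Delta)f=|\nabla f|^2-S. \]
I would then apply a maximum-principle argument to the auxiliary function
\[ P:=\tau|\nabla f|^2-(1+C_1\tau)(f+C_2\tau), \]
where $C_1,C_2>0$ are to be chosen at the end in terms of $k_1,\dots,k_4$ and $n$. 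The goal is $P\leq 0$ on $M\times[T-\tau_0,T]$ with $\tau_0=\min\{1,T\}$; substituting back $|\nabla f|^2=|\nabla q|^2/q^2$ and $f=\ln(A/q)$ this is exactly the claimed estimate.

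Next I would compute $(\partial_t+\Delta)P$. The two key ingredients are the metric evolution $\partial_t g^{ij}=2\mathcal{S}^{ij}$, which contributes $2\mathcal{S}(\nabla f,\nabla f)$ to $\partial_t|\nabla f|^2$, and Bochner's formula $\Delta|\nabla f|^2=2|\Hs f|^2+2\langle\nabla f,\nabla\Delta f\rangle+2\Ric(\nabla f,\nabla f)$. Combining these with the PDE for $f$ and using the identity $\nabla(\tau|\nabla f|^2)=\nabla P+(1+C_1\tau)\nabla f$ to re-express the $\langle\nabla f,\nabla|\nabla f|^2\rangle$ cross term, I expect the computation to yield
\begin{align*}
(\partial_t+\Delta)P&=C_1\tau|\nabla f|^2+2\langle\nabla f,\nabla P\rangle+2\tau|\Hs f|^2\\
&\quad+2\tau(\Ric+\mathcal{S})(\nabla f,\nabla f)-2\tau\langle\nabla f,\nabla S\rangle\\
&\quad+C_1(f+C_2\tau)+(1+C_1\tau)(S+C_2).
\end{align*}
The maximum-principle step is then standard. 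Since $P(\cdot,T)=-f\leq 0$ and $M$ is compact, if $P$ were positive somewhere in $M\times[T-\tau_0,T)$ there would be a first point $(x^{\ast},\tau^{\ast})$ at which $P=0$, $\nabla P=0$, $\Delta P\leq 0$, and $\partial_t P\leq 0$, hence $(\partial_t+\Delta)P\leq 0$ there. The constraint $P=0$ gives $C_1(f+C_2\tau)=C_1\tau|\nabla f|^2/(1+C_1\tau)\geq 0$. Using the hypotheses $(\Ric+\mathcal{S})(\nabla f,\nabla f)\geq -(k_1+k_2)|\nabla f|^2$, Cauchy--Schwarz and Young to bound $2\tau|\langle\nabla f,\nabla S\rangle|\leq \tau|\nabla f|^2+\tau k_3$, and $S\geq -k_4$, one obtains
\[ 0\;\geq\;(\partial_t+\Delta)P\bigr|_{(x^{\ast},\tau^{\ast})}\;\geq\;\bigl(C_1-2(k_1+k_2)-1\bigr)\tau|\nabla f|^2+(1+C_1\tau)(C_2-k_4)-\tau k_3, \]
which becomes strictly positive for $C_1,C_2$ sufficiently large in terms of the $k_i$'s --- a contradiction. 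Hence $P\leq 0$, which is the stated inequality.

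The main technical obstacle I anticipate is the careful bookkeeping in deriving the formula for $(\partial_t+\Delta)P$: the cancellation between the $-|\nabla f|^2$ coming from $\partial_t\tau=-1$ and the $(1+C_1\tau)|\nabla f|^2$ piece coming from $(1+C_1\tau)(\partial_t+\Delta)f$ must leave precisely $C_1\tau|\nabla f|^2$, and the $2\langle\nabla f,\nabla P\rangle$ piece must be isolated cleanly so that it vanishes at the critical point. The feature specific to the $(RH)_\alpha$ flow is that $\Ric$ (from Bochner) and $\mathcal{S}$ (from the metric evolution) appear summed, forcing hypotheses on both tensors; the map $\phi$ enters only through the scalar $S$, so the dependence on $k_3,k_4$ reflects the conjugate-heat potential rather than any geometry of the target $N$.
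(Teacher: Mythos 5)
Your proof is correct, and the underlying strategy --- a Li--Yau-type parabolic maximum principle applied to an auxiliary quantity combining $|\nabla(\cdot)|^2$ and $\ln(A/\cdot)$, using Bochner's formula together with the evolution of the metric --- is the same as the paper's. The difference lies in the choice of variable: you pass to $f=\ln(A/q)$ at the outset and use $P=\tau|\nabla f|^2-(1+C_1\tau)(f+C_2\tau)$, whereas the paper works with $q$ directly and forms $\Phi=a(\tau)\frac{|\nabla q|^2}{q}-b(\tau)q\ln\frac{A}{q}-c(\tau)q$ with more elaborate coefficient functions $a,b,c$. The logarithmic substitution is a genuine simplification: the PDE becomes $(\partial_t+\Delta)f=|\nabla f|^2-S$, so the term $\langle\nabla f,\nabla(\partial_t f+\Delta f)\rangle$ in the Bochner computation produces only $\langle\nabla f,\nabla|\nabla f|^2\rangle-\langle\nabla f,\nabla S\rangle$, with no $S|\nabla f|^2$ cross term; the paper's route instead goes through $\nabla(Sq)=q\nabla S+S\nabla q$ and picks up an extra $-2S|\nabla q|^2/q$, which is where their $n$-dependence (via $-S\leq nk_2$) enters. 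Consequently your $C_1,C_2$ come out independent of $n$ --- sharper but still consistent with the lemma's statement --- and you also sidestep the Hessian completion-of-square step, since $2\tau|\Hs f|^2\geq 0$ can simply be discarded. I verified your formula for $(\partial_t+\Delta)P$ and it is exactly right, including the isolation of the $2\langle\nabla f,\nabla P\rangle$ term. One minor remark on rigor: to run the first-hitting-time argument cleanly, use that $P(\cdot,\tau=0)=-f<0$ \emph{strictly} (from $q<A$), so the first time $\tau^{\ast}$ at which $\max_M P(\cdot,\tau)$ reaches $0$ is strictly positive; alternatively, perturb $P$ by $-\epsilon\tau$ and let $\epsilon\to0$.
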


\begin{proof}
We start by computing
\begin{align}\label{conj_heat_op}
\left(-\ppt-\triangle\right)\frac{|\nabla{q}|^2}{q}&=\SS\frac{|\nabla{q}|^2}{q}+\frac{1}{q}\left(-\ppt-\triangle\right)|\nabla{q}|^2-2\frac{|\nabla{q}|^4}{q^3}+2\nabla(|\nabla{q}|^2)\frac{\nabla q}{q^2}.
\end{align}
Observing that \[\nabla(|\nabla{q}|^2)\nabla q=\nabla(g^{ij}\nabla_iq\nabla_jq)\nabla q =2 g^{ij}\nabla(\nabla_iq)\nabla_jq\cdot\nabla q=2\nabla^2q(\nabla q,\nabla q)\]
while
\begin{align*}
\triangle(|\nabla q|^2)&=2|\nabla^2 q|^2+2R_{ij}\nabla_iq\nabla_jq+2\nabla_iq\nabla_i(\triangle q)\\
\ppt(|\nabla q|^2)&= 2S_{ij}\nabla_iq\nabla_jq+2\nabla q\cdot\nabla \left(\ppt q\right) 
\end{align*}
one can turn the second term in (\ref{conj_heat_op}) into 
\begin{align*}
\frac{1}{q}\left(-\ppt-\triangle\right)|\nabla{q}|^2&=\frac{1}{q}\Big[-2(\mathcal{S}+\Rc)(\nabla{q},\nabla{q})-2\nabla{q}\nabla{(\SS q)}-2|\nabla^2{q}|^2 \Big].
\end{align*}
Thus equation (\ref{conj_heat_op}) now becomes:
\begin{align*}
\left(-\ppt-\triangle\right)\frac{|\nabla{q}|^2}{q}&=\frac{-2}{q}\left(|\nabla^2{q}|^2-2\frac{1}{q}\nabla^2q(\nabla q,\nabla q)+\frac{|\nabla q|^4}{q^2}\right)^2 \\
    &+ \frac{-2(\mathcal{S}+\Rc)(\nabla{q},\nabla{q})-2\SS\nabla{q}\nabla{q}-2q\nabla{q}\nabla{\SS}}{q}+\SS\frac{|\nabla{q}|^2}{q}\\
    &=\frac{-2}{q}\left|\nabla^2{q}-\frac{\nabla q\otimes \nabla q}{q}\right|^2+\\
    &+\frac{-2(\mathcal{S}+\Rc)(\nabla{q},\nabla{q})-2\SS\nabla{q}\nabla{q}-2q\nabla{q}\nabla{\SS}}{q}+\SS\frac{|\nabla{q}|^2}{q}\\
   &\leq \frac{-2(\mathcal{S}+\Rc)(\nabla{q},\nabla{q})-2\SS\nabla{q}\nabla{q}-2q\nabla{q}\nabla{\SS}}{q}+\SS\frac{|\nabla{q}|^2}{q}\\ 
   &\leq (2(k_{1}+k_{2})+nk_2)\frac{|\nabla{q}|^2}{q}+2|\nabla{q}||\nabla{\SS}|\\
   &\leq (2k_{1}+(2+n)k_2+1)\frac{|\nabla{q}|^2}{q}+k_{3}q
\end{align*}
where we have used the assumption that there exist $k_{1},k_{2},k_{3}, k_4>0$ such that $-\Rc (g(t)) \leq k_{1}g(t)$, $-\mathcal{S} \leq k_{2} g(t)$, $|\nabla{\SS}|^2 \leq k_{3}$ and $|\SS|  \leq k_{4}$.  

Furthermore, we have
\begin{align*}
\left(-\ppt-\triangle\right)\left(q\ln{\frac{A}{q}}\right)&=-\SS q\ln{\frac{A}{q}}+\SS q+\frac{|\nabla{q}|^2}{q}\\
& \geq \frac{|\nabla{q}|^2}{q}-nk_{2}q-k_{4}q\ln{\frac{A}{q}}.
\end{align*}
Let $\Phi(x,\tau)=a(\tau)\frac{|\nabla{q}|^2}{q}-b(\tau)q\ln{\frac{A}{q}}-c(\tau)q,$ then 
\begin{align*}
\left(-\ppt-\triangle\right)\Phi \leq &\frac{|\nabla{q}|^2}{q}\Big(a\rq{}(\tau)+a(\tau)(2k_{1}+(2+n)k_2+1)-b(\tau)\Big)\\
&+q\ln{\frac{A}{q}}\Big(k_{4}b(\tau)-b\rq{}(\tau)\Big)\\
&+q\Big(k_{3} a(\tau)-c\rq{}(\tau)+nk_{2}b(\tau)+c(\tau)k_{4}\Big).
\end{align*}
We are free to choose the functions $a,b,c$ appropriately such that $(-\partial_{t}-\triangle)\Phi\leq 0$. For example,
\begin{align*}
a(\tau)&=\frac{\tau}{1+(2k_{1}+(2+n)k_2+1)\tau},\\
b(\tau)&=e^{k_{4}\tau},\\
c(\tau)&=(e^{k_{5}k_{4}\tau}nk_{2}+k_{3})\tau.
\end{align*}
Denote with $k_{5}=1+\frac{k_3}{nk_2}$. By the maximum principle, noticing that $\Phi\leq 0$ at $\tau=0$, 
$$a\frac{|\nabla{q}|^2}{q}\leq b(\tau)q\ln{\frac{A}{q}}+cq.$$

Then, one can conclude that there exist $C_{1},C_{2}$ depending on $k_{1},k_{2},k_{3}, k_{4}$ and $n$ such that for $0<\tau\leq \min\{1,T\}$, we have
\begin{equation}
\tau \frac{|\nabla{q}|^2}{q^2}\leq (1+C_{1}\tau)\left(\ln{\frac{A}{q}}+C_{2}\tau\right).
\end{equation}

\end{proof}

Finally, we will need the following lemma, where the $l_\phi$ distance, introduced in the second section, will be used. 

\begin{lemma}
\label{integralboundabove} Using the notation as in the previous lemma, the following inequality holds $\int_{M}hH\Phi d\mu_{M}\leq \frac{n}{2}\Phi(y,T)$, i.e, $\int_{M}(h-\frac{n}{2})H\Phi d\mu_{M}\leq 0.$
\end{lemma}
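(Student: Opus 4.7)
The plan is to establish $J(t):=\int_M hH\Phi\,d\mu_M \le \tfrac{n}{2}\Phi(y,T)$ by combining two ingredients: the asymptotic behavior of the conjugate heat kernel at $t=T^-$, which pins down the value of $J$ in the limit, and a monotonicity argument showing $J(t)$ is non-decreasing in $t$ on $[0,T)$.

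First, I would identify the limit $\lim_{t\to T^-} J(t)$. By Theorem \ref{asymptoticconj}, $H(x,t;y,T)\sim (4\pi\tau)^{-n/2}e^{-d_T^2(x,y)/(4\tau)}$ as $\tau\to 0^+$, which forces $h(x,t;y,T) = d_T^2(x,y)/(4\tau) + O(1)$ on the $O(\sqrt\tau)$-neighborhood of $y$ where the mass of $H$ concentrates. Passing to geodesic normal coordinates at $y$, rescaling $x\mapsto y+\sqrt\tau z$, and using continuity of $\Phi$ together with the standard Gaussian second moment, one obtains
\[
\lim_{t\to T^-} J(t) \;=\; \Phi(y,T)\int_{\mathbb{R}^n}\tfrac{|z|^2}{4}(4\pi)^{-n/2}e^{-|z|^2/4}\,dz \;=\; \tfrac{n}{2}\Phi(y,T).
\]

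Next, I would compute $dJ/dt$ directly. Using $\partial_t d\mu_M = -S\,d\mu_M$, the conjugate heat equation $\partial_t H = -\Delta H + SH$, the heat equation $\partial_t \Phi = \Delta\Phi$, the induced evolution $\partial_t h = \tfrac{n}{2\tau} + |\nabla h|^2 - \Delta h - S$, the identity $\nabla H = -H\nabla h$, and two integrations by parts on the closed $M$, a routine calculation yields
\[
\frac{dJ}{dt} \;=\; \frac{n}{2\tau}\,\Phi(y,T) \;-\; \int_M(|\nabla h|^2 + S)H\Phi\,d\mu_M.
\]

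To finish, I would show $dJ/dt\ge 0$ on $[0,T)$, so that $J(t)\le J(T^-) = \tfrac{n}{2}\Phi(y,T)$. The natural route is to exploit the gradient estimate of Lemma \ref{gradconjS} applied to $q=H$ with the explicit upper bound $A=e^{B-\tau D/3}(4\pi\tau)^{-n/2}$ supplied by Lemma \ref{conjestimateS}, giving $\tau|\nabla h|^2 \le (1+C_1\tau)(h+B-\tau D/3+C_2\tau)$. Integrating against $H\Phi\,d\mu_M$ and absorbing the $S$-term via $|S|\le k_4$ converts the required bound on $\int(|\nabla h|^2+S)H\Phi\,d\mu_M$ into a first-order differential inequality for $J$ that, together with the terminal value $J(T^-) = \tfrac{n}{2}\Phi(y,T)$, closes via Gronwall. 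The main obstacle is precisely this monotonicity step: the gradient estimate of Lemma \ref{gradconjS} is only valid for $\tau\le\min\{1,T\}$, so the monotonicity argument must be iterated or supplemented on the remainder of $[0,T)$ by the pointwise comparison $h\le\ell_\phi$ from Lemma \ref{compareRDandconjheat}(c); the latter route reduces the claim to the corresponding integral estimate for the reduced distance $\ell_\phi$, which in turn follows from the subsolution property of $(4\pi\tau)^{-n/2}e^{-\ell_\phi}$ established in Lemma \ref{compareRDandconjheat}(b).
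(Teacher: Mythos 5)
Your first step — pinning down $\lim_{t\to T^-}\int_M hH\Phi\,d\mu_M=\tfrac{n}{2}\Phi(y,T)$ via the Gaussian asymptotics of $H$ — is sound and in fact coincides with the whole content of the paper's proof: the paper bounds $h\leq\ell_\phi$, replaces $\ell_\phi$ by $d_T^2(x,y)/(4\tau)$ up to vanishing errors, inserts the leading term of Theorem~\ref{asymptoticconj}, and computes the Gaussian second moment to get exactly $\tfrac{n}{2}\Phi(y,T)$. The paper then stops: it establishes only the $\limsup_{\tau\to 0}$ inequality, and that is also the only form in which the lemma is invoked inside Proposition~\ref{limit0}, where it always appears under $\lim_{t\to T}$. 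The statement as printed reads like a pointwise-in-$t$ bound, but the proof given is a limit statement, and no pointwise version is ever needed.

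Your second and third steps attempt to upgrade this to a bound valid for all $t<T$ via monotonicity of $J(t)=\int_M hH\Phi\,d\mu_M$, and this is where the argument breaks. Your formula $\ddt J=\tfrac{n}{2\tau}\Phi(y,T)-\int_M(|\nabla h|^2+S)H\Phi\,d\mu_M$ is correct, but the proposed use of Lemma~\ref{gradconjS} plus $|S|\leq k_4$ produces, after integrating against $H\Phi\,d\mu_M$, an inequality of the schematic form $\int_M|\nabla h|^2H\Phi\,d\mu_M\leq\tfrac{1+C_1\tau}{\tau}\bigl(J(t)+\text{const}\cdot\Phi(y,T)\bigr)$. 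Feeding this back gives $\ddt J\geq\tfrac{n}{2\tau}\Phi(y,T)-\tfrac{1+C_1\tau}{\tau}J(t)-\text{const}\cdot\Phi(y,T)$, and because the coefficient of $J$ exceeds $1/\tau$ (the prefactor is $1+C_1\tau>1$), comparing with the terminal value $J(T^-)=\tfrac{n}{2}\Phi(y,T)$ does not yield $J(t)\leq\tfrac{n}{2}\Phi(y,T)$: the Gronwall step you invoke would need the coefficient to be at most $1/\tau$, and you cannot absorb the excess without already knowing the bound you are trying to prove. You acknowledge the monotonicity step is the obstacle, but the fallback you offer for $\tau$ outside $(0,\min\{1,T\}]$ — reducing to an integral estimate for $\ell_\phi$ via the subsolution property of $(4\pi\tau)^{-n/2}e^{-\ell_\phi}$ — is not justified either: that subsolution property gives a pointwise \emph{lower} bound for $H$, which does not control $\int_M\ell_\phi H\Phi\,d\mu_M$ from above. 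Indeed, on a flat model $h\approx\tfrac{n}{2}\log(4\pi\tau)+\text{const}$ for $\tau$ of order one, so $\int_M hH\Phi\,d\mu_M$ has no reason to stay below $\tfrac{n}{2}\Phi(y,T)$ away from $\tau\to 0$; the pointwise-in-$t$ version is most likely false. Drop the monotonicity step: the $\limsup$ statement you prove in the first paragraph is the actual content of the lemma and is all that the rest of the paper uses.
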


\begin{proof}
By lemma \ref{compareRDandconjheat} we have
\begin{align*}
\limsup_{\tau\rightarrow 0}\int_{M}hH\Phi d\mu_{M}\leq \limsup_{\tau\rightarrow 0}\int_{M} \ell_{w}(x,\tau)H\Phi d\mu_{M}(x)\\
\leq \limsup_{\tau\rightarrow 0}\int_{M} \frac{d_{T}^2(x,y)}{4\tau}H\Phi d\mu_{M}(x).
\end{align*}
Using Lemma \ref{asymptoticconj}, 
\begin{equation*}
\lim_{\tau\rightarrow 0}\int_{M} \frac{d_{T}^2(x,y)}{4\tau}H\Phi d\mu_{M}(x)=\lim_{\tau\rightarrow 0}\int_{M} \frac{d_{T}^2(x,y)}{4\tau}\frac{e^{-\frac{d_{T}^2(x,y)}{4\tau}}}{(4\pi\tau)^{n/2}}\Phi d\mu_{M}(x).
\end{equation*}
Either by differentiating twice under the integral sign or using these following standard identities on Euclidean spaces
\[\int_{-\infty}^{\infty}e^{-a\textbf{x}^2}d\textbf{x} =\sqrt{\frac{\pi}{a}} \text{ and } \int_{-\infty}^{\infty}\textbf{x}^{2}e^{-a\textbf{x}^2}d\textbf{x} =\frac{1}{2a}\sqrt{\frac{\pi}{a}},
\]
we find
\begin{equation*}
\int_{\RR^{n}}|x|^{2}e^{-a|x|^2}dx=n\left(\int_{-\infty}^{\infty}\textbf{x}^{2}e^{-a\textbf{x}^2}d\textbf{x}\right)\left(\int_{-\infty}^{\infty}e^{-a\textbf{x}^2}d\textbf{x}\right)^{n-1}=\frac{n}{2a}\left(\frac{\pi}{a}\right)^{n/2}.
\end{equation*}
Therefore,
\begin{equation*}
\lim_{\tau\rightarrow 0}\frac{d_{T}^2(x,y)}{4\tau}\frac{e^{-\frac{d_{T}^2(x,y)}{4\tau}}}{(4\pi\tau)^{n/2}}=\frac{n}{2}\delta_{y}(x)
\end{equation*}
and so 
\begin{equation*}
\lim_{\tau\rightarrow 0}\int_{M} \frac{d_{T}^2(x,y)}{4\tau}\frac{e^{-\frac{d_{T}^2(x,y)}{4\tau}}}{(4\pi\tau)^{n/2}}\Phi d\mu_{M}(x)=\frac{n}{2}\Phi(y,T).
\end{equation*}
The result now follows.
\end{proof}

\section{Proof of Theorem \ref{thm-Harnack}}\label{main-results}

The procedure will be standard, we will apply the maximum principle. In order to do that, we need to prove the non-positivity of $\Box^*v$. 

\subsection{Evolution of the Harnack Quantity}
\begin{lemma}
Let $v=\Big((T-t)(2\triangle{h}-|\nabla{h}|^2+S)+h-n\Big)H$ then 
\begin{equation}
\Box^{\ast}v=-2(T-t)\Big(\left|\mathcal{S}+\Hs h-\frac{g}{2\tau}\right|^2+2\alpha(\langle\nabla\phi,\nabla h\rangle^2+|\tau_g\phi|^2)\Big)H\leq 0.
\end{equation}
\end{lemma}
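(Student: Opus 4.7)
The strategy follows Perelman's derivation of his monotonicity formula for the Ricci flow, with extra bookkeeping to absorb harmonic-map contributions. The plan has three parts: reduce $\Box^{\ast}v$ to a drift operator applied to a scalar; compute this drift operator on $P:=2\Delta h-|\nabla h|^{2}+S$; and complete the square against $\bigl|\mathcal{S}+\Hs h-\frac{g}{2\tau}\bigr|^{2}$. From $H=(4\pi\tau)^{-n/2}e^{-h}$ and $\Box^{\ast}H=0$ one derives $\partial_{t}h = \frac{n}{2\tau}+|\nabla h|^{2}-\Delta h-S$ and, by the product rule, $\Box^{\ast}(fH) = -H\,Lf$ for every smooth $f$, where $L := \partial_{t}+\Delta-2\nabla h\cdot\nabla$. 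Applied to $f = \tau P+h-n$, and using $Lh = \frac{n}{2\tau}-|\nabla h|^{2}-S$ together with $L(\tau P) = -P+\tau LP$, this gives
\begin{equation*}
Lf = \tau\,LP-2\Delta h-2S+\frac{n}{2\tau},
\end{equation*}
so everything reduces to computing $LP$.

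\textbf{Core computation.} Decompose $LP = 2L(\Delta h)-L|\nabla h|^{2}+LS$. For $LS$ substitute (\ref{evolS}). For $L|\nabla h|^{2}$ use the metric evolution $\partial_{t}g^{ij}=2S^{ij}$, the chain rule on $\partial_{t}h$, and Bochner's identity $\Delta|\nabla h|^{2}=2|\Hs h|^{2}+2\nabla h\cdot\nabla\Delta h+2\Ric(\nabla h,\nabla h)$. For $L(\Delta h)$ use the commutator $[\partial_{t},\Delta]h = 2S^{ij}\nabla_{i}\nabla_{j}h-g^{ij}(\partial_{t}\Gamma^{k}_{ij})\nabla_{k}h$, computing $g^{ij}\partial_{t}\Gamma^{k}_{ij}$ via the generalized second Bianchi identity $2\nabla^{i}S_{ij}=\nabla_{j}S-2\alpha(\tau_{g}\phi)\nabla_{j}\phi$ (the same identity invoked in the proof of Lemma~\ref{compareRDandconjheat}(b)). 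After the $\Delta S$, $\nabla h\cdot\nabla S$, and $\nabla h\cdot\nabla\Delta h$ pieces cancel in pairs, and after using $\Ric(\nabla h,\nabla h) = S_{ij}\nabla^{i}h\nabla^{j}h+\alpha\langle\nabla\phi,\nabla h\rangle^{2}$ to trade $\Ric$ for $\mathcal{S}$, one obtains an explicit formula for $LP$ in which the harmonic-map contributions assemble into a perfect square.

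\textbf{Completing the square.} Directly expanding gives
\begin{equation*}
2\tau\,\Bigl|\mathcal{S}+\Hs h-\frac{g}{2\tau}\Bigr|^{2} = 2\tau|\mathcal{S}|^{2}+2\tau|\Hs h|^{2}+4\tau S_{ij}\nabla^{i}\nabla^{j}h+\frac{n}{2\tau}-2S-2\Delta h,
\end{equation*}
which matches the non-harmonic portion of $\tau\,LP-2\Delta h-2S+\frac{n}{2\tau}$ exactly. The remaining pieces constitute a non-negative sum of squared harmonic-map expressions, and multiplying by $-H$ yields the claimed formula together with $\Box^{\ast}v\leq 0$; the non-increasing assumption on $\alpha$ ensures that the residual $\alpha'|\nabla\phi|^{2}$ contribution coming from (\ref{evolS}) carries the correct sign.

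\textbf{Main obstacle.} The difficulty is combinatorial rather than conceptual: three independent sources of $\alpha$-terms must be tracked and seen to combine into a single perfect square, namely (i) $2\alpha|\tau_{g}\phi|^{2}$ from (\ref{evolS}), (ii) the Bianchi-induced $-2\alpha\langle\tau_{g}\phi,\nabla\phi\cdot\nabla h\rangle$ from the commutator $[\partial_{t},\Delta]$, and (iii) the $\alpha\langle\nabla\phi,\nabla h\rangle^{2}$ piece coming from both the $\Ric$-to-$\mathcal{S}$ substitution and the $\partial_{t}g^{ij}$-correction inside $L|\nabla h|^{2}$. This is precisely the same mechanism used to establish the non-negativity of $\mathcal{D}(\mathcal{S},X)$ in Lemma~\ref{compareRDandconjheat}(b); once it is recognized, the rest of the calculation is routine and mirrors Perelman's original argument verbatim.
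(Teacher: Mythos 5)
Your strategy is the same as the paper's: observe $\Box^{\ast}(fH)=-H\,Lf$ with $L=\partial_{t}+\Delta-2\nabla h\cdot\nabla$, compute $L$ on $P=2\Delta h-|\nabla h|^{2}+S$ using (\ref{evolS}), Bochner's identity and the metric evolution, and complete the square against $\bigl|\mathcal{S}+\Hs h-\frac{g}{2\tau}\bigr|^{2}$. The one substantive place you diverge from the paper's written proof is your item (ii). The paper writes $(\partial_{t}+\Delta)\Delta h=\Delta(\partial_{t}h)+2\langle\mathcal{S},\Hs h\rangle+\Delta(\Delta h)$, which silently drops the $g^{ij}(\partial_{t}\Gamma^{k}_{ij})\nabla_{k}h$ term; for the $(RH)_{\alpha}$ flow the generalized contracted Bianchi identity gives $g^{ij}\partial_{t}\Gamma^{k}_{ij}=2\alpha(\tau_{g}\phi)\nabla^{k}\phi\neq 0$, so this term does not vanish as it would for Ricci flow. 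Carrying it through as you propose, the three $\alpha$-sources combine into the perfect square $2\alpha\bigl|\tau_{g}\phi-\langle\nabla\phi,\nabla h\rangle\bigr|^{2}$ (the same structure as $\mathcal{D}(\mathcal{S},X)$ in Lemma~\ref{compareRDandconjheat}(b) with $X=\nabla h$), rather than the $2\alpha\bigl(|\tau_{g}\phi|^{2}+\langle\nabla\phi,\nabla h\rangle^{2}\bigr)$ stated in the lemma. Both expressions are nonnegative when $\alpha\geq0$, so $\Box^{\ast}v\leq 0$ holds either way; but your bookkeeping is the more faithful one and gives the cleaner perfect-square form.
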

\begin{proof}

Let $q=2\triangle{h}-|\nabla{h}|^2+S$ then 
\begin{align*}
H^{-1}\Box^{\ast}v&=-(\partial_{t}+\triangle)(\tau q+h)-2\left\langle{\nabla(\tau q+h),H^{-1}\nabla{H}}\right\rangle\\
&=q-\tau(\partial_{t}+\triangle)q-(\partial_{t}+\triangle)h+2\tau\left\langle{\nabla q,\nabla{h}}\right\rangle+2|\nabla{h}|^2.
\end{align*}
As H satisfies $\Box^{\ast}H=0$, $(\partial_{t}+\triangle)h=-S+|\nabla{h}|^2+\frac{n}{2\tau}$.
We compute
\begin{align*}
(\partial_{t}+\triangle)\triangle{h}&=\triangle\frac{\partial{h}}{\partial{t}}+2S_{ij}\nabla_i\nabla_jh+\triangle(\triangle h)\\
&=\triangle\left(-\triangle{h}+|\nabla{h}|^2-S+\frac{n}{2\tau}\right)+\triangle(\triangle h)+2\left\langle{\mathcal{S},\Hs(h)}\right\rangle\\
&=\triangle(|\nabla{h}|^2-S)+2\left\langle{\mathcal{S},\Hs(h)}\right\rangle,
\end{align*}
where we used the formula for the evolution of the Laplacian under the $(RH)_\alpha$ flow.
\begin{align*}
(\partial_{t}+\triangle)|\nabla{h}|^2=&2\mathcal{S}(\nabla{h},\nabla{h})+2\left\langle{\nabla{h},\nabla{\frac{\partial{h}}{\partial{t}}}}\right\rangle+\triangle{|\nabla{h}|^2}\\
=&2\left\langle{\nabla{h},\nabla(-\triangle{h}+|\nabla{h}|^2-S)}\right\rangle\\
&+2\mathcal{S}(\nabla{h},\nabla{h})+\triangle{|\nabla{h}|^2}.
\end{align*}
Recall from (\ref{evolS}), $(\partial_{t}+\triangle)S=2\triangle{S}+2|\mathcal{S}|^2+2\alpha|\tau_g\phi|^2-\alpha'(t)|\nabla\phi|^2,$ and
\begin{align*}
2\mathcal{S}(\nabla{h},\nabla{h})&=2\text{Rc}(\nabla{h},\nabla{h})-2\alpha\nabla\phi\otimes \nabla\phi(\nabla{h},\nabla{h})=2\text{Rc}(\nabla{h},\nabla{h})-2\alpha\left\langle{\nabla{\phi},\nabla{h}}\right\rangle^2\\
\triangle{|\nabla{h}|^2}&= 2\Hs(h)^2+2\left\langle{\nabla{h},\nabla{\triangle{h}}}\right\rangle+2\text{Rc}(\nabla{h},\nabla{h}),
\end{align*}
where the second equation is by Bochner's identity. 

Combining those above yields
\begin{align*}
(\partial_{t}+\triangle)q=&4\left\langle{\mathcal{S},\text{Hess}(h)}\right\rangle+\triangle|\nabla{h}|^2-2\mathcal{S}(\nabla{h},\nabla{h})\\
&-2\left\langle{\nabla{h},\nabla(-\triangle{h}+|\nabla{h}|^2-S)}\right\rangle+2|\mathcal{S}|^2+2\alpha|\tau_g\phi|^2-\alpha'(t)|\nabla\phi|^2\\
=&4\left\langle{\mathcal{S},\text{Hess}(h)}\right\rangle+2\left\langle{\nabla{h},\nabla q}\right\rangle+2\text{Hess}(h)^2\\
&+2|\mathcal{S}|^2+2|\tau_g\phi|^2+2\alpha\left\langle{\nabla{\phi},\nabla{h}}\right\rangle^2-\alpha'(t)|\nabla\phi|^2\\
=&2|\mathcal{S}+\text{Hess}(h)|^2+2\alpha\left(|\tau_g\phi|^2+\left\langle{\nabla{\phi},\nabla{h}}\right\rangle^2\right)+2\left\langle{\nabla{h},\nabla q}\right\rangle-\alpha'(t)|\nabla\phi|^2.
\end{align*}
Thus, 
\begin{align*}
H^{-1}\Box^{\ast}v&=q+S-|\nabla{h}|^2-\frac{n}{2\tau}+2|\nabla{h}|^2\\
&-2\tau\left(|\mathcal{S}+\text{Hess}(h)|^2+\alpha\left(|\tau_g\phi|^2+\left\langle{\nabla{\phi},\nabla{h}}\right\rangle^2\right)\right)\\
&=-2\tau\left(\left|\mathcal{S}+\text{Hess}(h)-\frac{g}{2\tau}\right|^2+\alpha\left(|\tau_g\phi|^2+\left\langle{\nabla{\phi},\nabla{h}}\right\rangle^2\right)-\frac{1}{2}\alpha'(t)|\nabla\phi|^2\right).
\end{align*}
The result follows by the positivity of $\alpha(t)$ and the fact that it is non-increasing.\\
\end{proof}

The only remaining ingredient needed for the proof is the following proposition: 
\begin{proposition} 
\label{limit0}
Let $v=\Big((T-t)(2\triangle{h}-|\nabla{h}|^2+S)+h-n\Big)H$. For $\Phi$ being a smooth positive solution to the heat equation, if  $\rho_{\Phi}(t)=\int_{M}v\Phi d\mu_{M}$, then $\lim_{t\rightarrow T}\rho_{\Phi}(t)=0$.
\end{proposition}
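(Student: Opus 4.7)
The plan is to decompose $\rho_\Phi(t)$ into pieces whose limits as $\tau=T-t\to 0$ can be computed from the heat kernel asymptotics of Theorem \ref{asymptoticconj} together with the Gaussian calculation already performed in the proof of Lemma \ref{integralboundabove}. Using $\nabla H=-H\nabla h$ and the identity $\triangle h = |\nabla h|^2 - \triangle H/H$, I would first rewrite the Harnack quantity as
\[
v \;=\; -2\tau\,\triangle H \;+\; \tau|\nabla h|^2 H \;+\; \tau S H \;+\; (h-n)H.
\]
Pairing with $\Phi$, integrating over the closed manifold $M$, and performing one integration by parts on the $\triangle H$ term yields
\[
\rho_\Phi(t) \;=\; -2\tau\!\int_M\! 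H\,\triangle\Phi\,d\mu + \tau\!\int_M\!|\nabla h|^2 H\Phi\,d\mu + \tau\!\int_M\! SH\Phi\,d\mu + \int_M(h-n)H\Phi\,d\mu.
\]

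Since $H(\cdot,t;y,T)\,d\mu\to\delta_y$ as $t\to T$ (so $\int H\Phi\,d\mu\to\Phi(y,T)$) and both $\triangle\Phi$ and $S$ are bounded on $M\times[0,T]$, the prefactor $\tau$ kills the first and third integrals. For the remaining two integrals I would invoke Theorem \ref{asymptoticconj}: taking logarithms of $H\sim (4\pi\tau)^{-n/2}e^{-d_T^2(x,y)/(4\tau)}\bigl(u_0(x,y,\tau)+O(\tau)\bigr)$ with $u_0(y,y,0)=1$ gives
\[
h(x,t;y,T) \;=\; \frac{d_T^2(x,y)}{4\tau} - \ln u_0(x,y,\tau) + O(\tau)
\]
uniformly in $x$. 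Because $H\Phi\,d\mu\to\Phi(y,T)\delta_y$ and $\ln u_0$ is continuous with $\ln u_0(y,y,0)=0$, both $-\ln u_0$ and the $O(\tau)$ remainder contribute $0$ after integration, so $\int hH\Phi\,d\mu\to \lim_{\tau\to 0}\int (d_T^2/4\tau)H\Phi\,d\mu=(n/2)\Phi(y,T)$ by the Gaussian computation carried out inside the proof of Lemma \ref{integralboundabove}; hence $\int(h-n)H\Phi\,d\mu\to -(n/2)\Phi(y,T)$. Differentiating the same asymptotic in $x$ yields $\nabla h = \nabla_x(d_T^2)/(4\tau) + O(1)$, so $\tau|\nabla h|^2 = d_T^2(x,y)/(4\tau) + O(d_T^2) + O(\tau)$; its leading piece again contributes $(n/2)\Phi(y,T)$ in the limit while the lower-order pieces vanish. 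Summing, $\rho_\Phi(t)\to 0 + \tfrac{n}{2}\Phi(y,T) + 0 - \tfrac{n}{2}\Phi(y,T) = 0$.

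The main obstacle is justifying the pointwise asymptotic for $\nabla h$ with enough uniformity, since Theorem \ref{asymptoticconj} states the expansion only for $H$ itself. Parabolic regularity for the conjugate heat equation allows one to differentiate the asymptotic term by term with controlled remainders, which handles this cleanly. A more self-contained alternative that stays within the tools already developed in this section is to apply the gradient estimate of Lemma \ref{gradconjS} to $H$ on a shrinking window near $t=T$ to obtain a pointwise upper bound of the form $\tau|\nabla h|^2 H \leq (1+O(\tau))\bigl(h+O(\tau)\bigr)H$, then squeeze $\tau\int|\nabla h|^2 H\Phi\,d\mu$ between this upper bound (combined with Lemma \ref{integralboundabove}) and a matching lower bound coming from the leading Gaussian term of the asymptotic for $H$. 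Either route should deliver the limit identity $\lim_{t\to T}\rho_\Phi(t) = 0$.
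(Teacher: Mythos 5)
Your starting point coincides exactly with the paper's: the same integration by parts gives
\[
\rho_\Phi(t) \;=\; \tau\!\int_M\!|\nabla h|^2 H\Phi\,d\mu + \int_M hH\Phi\,d\mu - 2\tau\!\int_M\! H\triangle\Phi\,d\mu + \int_M(\tau S - n)H\Phi\,d\mu,
\]
and the $\tau$-prefactored terms involving $\triangle\Phi$ and $S$ are disposed of the same way. After that the two arguments diverge sharply, and there is a genuine gap in yours.

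You propose to compute the limits of $\tau\int|\nabla h|^2 H\Phi\,d\mu$ and $\int hH\Phi\,d\mu$ \emph{exactly} (each $\to (n/2)\Phi(y,T)$) so that everything cancels. But the only tools the paper provides are one-sided: Lemma~\ref{compareRDandconjheat}(c) gives $h\leq\ell_\phi$, which is why Lemma~\ref{integralboundabove} produces a $\limsup\ \leq\ (n/2)\Phi(y,T)$, not an equality; and Lemma~\ref{gradconjS} gives only an \emph{upper} bound on $\tau|\nabla h|^2/q^2$. To turn these into equalities you would need matching \emph{lower} bounds, i.e. an estimate of the form $h \geq d_T^2/(4\tau) - o(1)$ in a suitable integrated sense, and a corresponding lower bound for $\nabla h$. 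Your first route (differentiating Theorem~\ref{asymptoticconj} term by term via parabolic regularity) is plausible in spirit but is not a consequence of that theorem as stated — the remainder $w_k$ is only $O(\tau^{k+1-n/2})$ uniformly in $x$, and taking logarithms of the expansion then differentiating in $x$ requires uniform control of $\nabla_x w_k$ and of $u_0$ away from zero, neither of which is supplied. Your second route, the ``squeeze,'' explicitly requires ``a matching lower bound coming from the leading Gaussian term of the asymptotic for $H$,'' which is precisely the missing ingredient: Lemma~\ref{compareRDandconjheat} and Lemma~\ref{gradconjS} push only in one direction, and the paper never produces, nor needs, a two-sided asymptotic for $h$ or $\nabla h$.

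The paper's proof circumvents this entirely by exploiting structure you haven't used: from $\Box^*v\leq 0$ it gets $\partial_t\rho_\Phi\geq 0$, so the (one-sided) bound already established forces $\lim_{t\to T}\rho_\Phi=\beta$ to \emph{exist}. A mean-value argument on $\rho_\Phi(T-\tau)-\rho_\Phi(T-\tau/2)\to 0$ extracts a sequence $\tau_i\to 0$ along which the integrated $\Box^*v$ term is $o(\tau_i^{-2})$; Cauchy--Schwarz and the trace inequality then show $\int\tau_i\bigl(S+\triangle h-\tfrac{n}{2\tau_i}\bigr)H\Phi\,d\mu\to 0$, which (together with Lemma~\ref{integralboundabove}) gives $\beta\leq 0$. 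Finally $\beta<0$ is ruled out by rewriting $\rho_\Phi$ in terms of $\mathbb{W}_\alpha$ and invoking $\lim_{\tau\to 0^+}\mu_\alpha=0$ from Lemma~\ref{basicmu}(d). This monotonicity-plus-entropy mechanism is what replaces the two-sided Gaussian asymptotics your proposal leans on; without it, or without a rigorously established lower bound on $h$, your argument does not close.
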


\begin{proof}

IBP yields
\begin{align*}
\rho_{\Phi}(t) &=\int_{M}\Big(\tau(2\triangle{h}-|\nabla h|^2+S)+h-n\Big)H\Phi d\mu_{M}\\
&=-\int_{M}2\tau\nabla{h}\nabla({H\Phi})d\mu_{M}-\int_{M}\tau|\nabla h|^2 H\Phi d\mu_{M}+\int_{M}(\tau S+h-n)H\Phi d\mu_{M}\\
&= \int_{M}\tau|\nabla h|^2 H\Phi d\mu_{M}-2\tau\int_{M} \nabla{\Phi}\nabla{h}H d\mu_{M}+\int_{M}(\tau S+h-n)H\Phi d\mu_{M}\\
&= \int_{M}\tau|\nabla h|^2 H\Phi d\mu_{M}-2\tau\int_{M}H\triangle{\Phi}d\mu_{M}+\int_{M}(\tau S+h-n)H\Phi d\mu_{M}\\
&= \int_{M}\tau|\nabla h|^2 H\Phi d\mu_{M}+\int_{M}hH\Phi d\mu_{M}-2\tau\int_{M}H\triangle{\Phi}d\mu_{M}+\int_{M}(\tau S-n)H\Phi d\mu_{M}.
\end{align*}
For the first term, one can use Lemmas \ref{conjestimateS} and \ref{gradconjS} on $M\times[\frac{\tau}{2},\tau]$ to find that
\begin{align*}
\tau\int_{M}|\nabla h|^2H\Phi d\mu_{M}&\leq (2+C_{1}\tau)\int_{M}\left(\ln{\left(\frac{C_{3}e^{-D\tau/3}}{H(4\pi\tau)^{n/2}}\right)}+C_{2}\tau\right)H\Phi d\mu_{M}\\
& \leq (2+C_{1}\tau)\int_{M}\left(\ln{C_{3}}-\frac{D\tau}{3}+h+C_{2}\tau\right)H\Phi d\mu_{M},
\end{align*}
with $C_{1},C_{2}$ as in Lemma \ref{gradconjS} while $C_{3}=\frac{e^{B}}{2^{n/2}}$.\\
By applying Lemma \ref{integralboundabove}, 
\begin{align*}
\lim_{\tau\rightarrow 0}\left(\int_{M}\tau|\nabla h|^2d\mu_{M}+\int_{M}hH\Phi d\mu_{M}\right)&\leq 3\int_{M}hH\Phi d\mu_{M}+2\ln{C_{3}}\Phi(x,T)\\
&\leq \left(\frac{3n}{2}+2\ln{C_{3}}\right)\Phi(x,T). 
\end{align*}
Observe that, except for the first two terms, all of them approach $-n\Phi(y,T)$ as $\tau\rightarrow 0$. Therefore
\begin{equation*}
\lim_{t\rightarrow T}\rho_{\Phi}(t)\leq C_{4}\Phi(x,T).
\end{equation*}
Furthermore, as $\Phi$ is a positive smooth function satisfying the heat equation $\partial_{t}\Phi=\triangle{\Phi}$, one obtains that
\begin{equation}
\label{evolintegral}
\partial_{t}\rho_{\Phi}(t)=\partial_{t}\int_{M}v\Phi d\mu_{M}=\int_{M}(\Box{\Phi}v-\Phi\Box^{\ast}v)d\mu_{M}\geq 0.
\end{equation}
The above conditions imply that there exists $\beta$ such that 
\begin{equation*}
\lim_{t\rightarrow T}\rho_{\Phi}(t)=\beta.
\end{equation*}
Hence $\lim_{\tau\rightarrow 0}(\rho_{\Phi}(T-\tau)-\rho_{\Phi}(T-\frac{\tau}{2}))=0$. By equation (\ref{evolintegral}),  and the mean-value theorem, there exists a sequence $\tau_{i}\rightarrow 0$ such that 
\begin{equation*}
\lim_{\tau_{i}\rightarrow 0}\tau_{i}^2\int_{M}\left(\left|\mathcal{S}+\text{Hess}h-\frac{g}{2\tau}\right|^2+\alpha\left(|\tau_g\phi|^2+\left\langle{\nabla{\phi},\nabla{h}}\right\rangle^2\right)-\frac{1}{2}\alpha'(t)|\nabla\phi|^2\right)H\Phi d\mu_{M}=0.
\end{equation*}
Using standard inequalitites yield,
\begin{align*}
&\left(\int_{M}\tau_{i}\left(S+\triangle{h}-\frac{n}{2\tau_{i}}\right)H\Phi d\mu_{M}\right)^2 \\
&\leq \left(\int_{M}\tau_{i}^2\left(S+\triangle{h}-\frac{n}{2\tau_{i}}\right)^2H\Phi d\mu_{M}\right)\left(\int_{M}H\Phi d\mu_{M}\right)\\
& \leq \left(\int_{M}\tau_{i}^2\left|\mathcal{S}+\text{Hess}h-\frac{g}{2\tau}\right|^2H\Phi d\mu_{M}\right)\left(\int_{M}H\Phi d\mu_{M}\right).
\end{align*} 
Since $\lim_{\tau_{i}\rightarrow 0}\int_{M}H\Phi d\mu_{M}=\Phi(y,T)<\infty$ and $\alpha\left(|\tau_g\phi|^2+\left\langle{\nabla{\phi},\nabla{h}}\right\rangle^2\right)-\frac{1}{2}\alpha'(t)|\nabla\phi|^2\geq 0$, 
\begin{equation*}
\lim_{\tau_{i}\rightarrow 0}\int_{M}\tau_{i}\left(S+\triangle{h}-\frac{n}{2\tau_{i}}\right)H\Phi d\mu_{M}=0.
\end{equation*}
Therefore, by Lemma \ref {integralboundabove},
\begin{align*}
\lim_{t\rightarrow T}\rho_{\Phi}(t)&=\lim_{t\rightarrow T}\int_{M}(\tau_{i}(2\triangle{h}-|\nabla{h}|^2+S)+h-n)H\Phi d\mu_{M}\\
&=\lim_{t\rightarrow T}\int_{M}\left(\tau_{i}(\triangle{h}-|\nabla{h}|^2)+h-\frac{n}{2}\right)H\Phi d\mu_{M}\\
&=\lim_{t\rightarrow T}\left(\int_{M}-\tau_{i}H\triangle{\Phi}d\mu_{M}+\int_{M}\left(h-\frac{n}{2}\right)H\Phi d\mu_{M}\right)\\
&=\int_{M}\left(h-\frac{n}{2}\right)H\Phi d\mu_{M}\leq 0. 
\end{align*}
Hence $\beta\leq 0$. To show that equality holds, we proceed by contradiction. Without loss of generality, we may assume $\Phi(y,T)=1$. Let $H\Phi=(4\pi\tau)^{-n/2}e^{\tilde{h}}$ (that is, $\tilde{h}=h-\ln{\Phi}$), then IBP yields,
\begin{equation}
\rho_{\Phi}(t)=\mathbb{W}_\alpha(g,u,\tau,\tilde{h})+\int_{M}\left(\tau\left(\frac{|\nabla{\Phi}|^2}{\Phi}\right)-\Phi\ln{\Phi}\right)H d\mu_{M}.
\end{equation}
By the choice of $\Phi$ the last term converges to 0 as $\tau\rightarrow 0$. So if $\lim_{t\rightarrow T}\rho_{\Phi}(t)=\beta<0$ then $\lim_{\tau\rightarrow 0}\mu_{w}(g,u,\tau)<0$ and, thus, contradicts Lemma \ref{basicmu}. Therefore the only possibility is that $\beta=0$.
\end{proof}
\subsection{Proof of theorem \ref{thm-Harnack}}
\begin{proof}
 Recall from inequality (\ref{evolintegral})  
$$\partial_{t}\int_{M}v\Phi d\mu_{M}=\int_{M}(\Box{h}v-h\Box^{\ast}v)d\mu_{M}\geq 0.$$
By Proposition \ref{limit0}, $\lim_{t\rightarrow T}\int_{M}v\Phi d\mu_{M}=0$. Since $\Phi$ is arbitrary, $v\leq 0$.
\end{proof}

\subsection{Proof of corollary \ref{cor-LYH}}

This follows from standard arguments, due to Perelman \cite{perelman1}.  

\section{Sobolev imbedding theorems}\label{doi}

Now we turn our attention to a different approach, that of bounding the heat kernel by means of a Sobolev imbedding theorem under the $(RH)_\alpha$-flow. We first present the Sobolev inequalities that form the basis of our exploration. 

Sharpening a result by T. Aubin (\cite{TA76}), E. Hebey proved in (\cite{EHMV96}) the following : 
\begin{theorem}\label{thm_Aubin}
Let $M^n$ be a smooth compact Riemannian manifold of dimension $n$. Then there exists a constant $B$ such that for any $\psi\in W^{1,2}(M)$ (the Sobolev space of weakly differentiable functions) :
\[||\psi||_p^2\leq K(n,2)^2||\nabla\psi||_2^2+B||\psi||_{2}^2 \hspace{0.5cm}.\]

Here $K(n,2)$ is the best constant in the Sobolev imbedding (inequality) in $\mathbb{R}^n$ and $p=(2n)/(n-2)$. $B$ depends on the lower bound of the Ricci curvature and the derivatives of the curvature tensor. 
\end{theorem}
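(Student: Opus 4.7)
The plan is to reduce the Riemannian inequality to the sharp Sobolev inequality on Euclidean space via a localization argument, and then to remove any excess constant in front of $||\nabla \psi||_2^2$ by a concentration/blow-up argument. Recall that the sharp Euclidean Sobolev inequality asserts
\[||\psi||_{L^p(\mathbb{R}^n)} \leq K(n,2)\,||\nabla \psi||_{L^2(\mathbb{R}^n)}\]
for all $\psi \in C_c^\infty(\mathbb{R}^n)$, with $p=2n/(n-2)$ and $K(n,2)$ the optimal Talenti--Aubin--Rodemich constant; this is what we want to transplant to $M$.

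First I would fix a small radius $r>0$ and cover $M$ by finitely many geodesic balls $B(x_\alpha, r)$ with a subordinate partition of unity $\{\eta_\alpha\}$ satisfying $\sum_\alpha \eta_\alpha^2 = 1$ and $||\nabla \eta_\alpha||_\infty \leq C/r$. In normal coordinates centered at each $x_\alpha$ the metric $g$ deviates from the Euclidean metric by an amount controlled by $r$ and by bounds on $\Rm$ and $\nabla \Rm$. Applying the Euclidean inequality to each $\eta_\alpha \psi$ (transplanted to $\mathbb{R}^n$ through the chart), summing the resulting bounds, and expanding $|\nabla(\eta_\alpha \psi)|^2$ with help of $\sum_\alpha \eta_\alpha^2 = 1$, produces a preliminary Aubin-type estimate
\[||\psi||_p^2 \leq (K(n,2)^2 + \varepsilon(r))\,||\nabla \psi||_2^2 + B(r)\,||\psi||_2^2,\]
where $\varepsilon(r)\to 0$ as $r\to 0$ and $B(r)$ depends on $r$, a lower bound of Ricci, and on derivatives of the curvature tensor.

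The core difficulty, which is Hebey's contribution, is to absorb $\varepsilon(r)$ and secure the \emph{sharp} constant $K(n,2)^2$ at the price of enlarging $B$. I would proceed by contradiction: if no finite $B$ sufficed, one could extract a sequence $\psi_k$ with $||\psi_k||_p=1$ and
\[K(n,2)^2\,||\nabla \psi_k||_2^2 + k\,||\psi_k||_2^2 < 1.\]
A concentration--compactness analysis then forces the measures $|\psi_k|^p\, d\mu$ to concentrate at a single point $x_0\in M$, and a blow-up rescaling at $x_0$ yields, in the limit, a Euclidean extremizer for the sharp Sobolev inequality. Expanding the Dirichlet energy in normal coordinates at $x_0$ to order $r^2$, where the scalar curvature appears in the volume form and the inverse metric, and comparing with the Euclidean quotient, produces a strict improvement that contradicts optimality of $K(n,2)$.

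The main obstacle is precisely this final blow-up analysis: one must expand the metric, volume form, and gradient with enough precision (second-order curvature terms) to extract the correct sign of the correction, while simultaneously tracking which curvature invariants end up in $B$. Showing that it is only the \emph{lower} bound of $\Ric$ and bounds on $\nabla^k \Rm$ that matter — rather than full two-sided curvature bounds — requires a delicate choice of the cover, the partition of unity, and the bilinear error terms created by transplanting $\eta_\alpha \psi$ through the exponential chart.
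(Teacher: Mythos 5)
The paper does not prove Theorem~\ref{thm_Aubin} at all: it is imported verbatim, with attribution, from Hebey--Vaugon~\cite{EHMV96}, and the surrounding text only comments on how the dependence of $B$ simplifies when $M$ is compact. So there is no in-paper proof to compare against; what can be assessed is whether your outline is a faithful roadmap to the cited result. It is. The two-stage structure you describe --- first the Aubin-type localization (geodesic cover, partition of unity $\sum_\alpha \eta_\alpha^2 = 1$, transplantation through exponential charts, yielding $(K(n,2)^2+\varepsilon(r))\|\nabla\psi\|_2^2 + B(r)\|\psi\|_2^2$), and then the Hebey--Vaugon contradiction via concentration-compactness and blow-up to remove $\varepsilon$ --- is precisely the strategy of~\cite{EHMV96}.

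Two caveats, both of which you partly anticipate but which are worth making sharper. First, the contradiction at the end is not an automatic consequence of ``expanding to order $r^2$.'' After the blow-up at the concentration point $x_0$ you obtain an Aubin--Talenti bubble saturating the Euclidean inequality, and the Riemannian and Euclidean energies agree to leading order; to derive a contradiction you must show that the next-order curvature correction has a definite favorable sign, and this sign analysis is genuinely delicate (it is where the dimension restriction and the choice of test geometry enter in Hebey--Vaugon, and where naive versions of the argument break). Second, a pure proof by contradiction establishes only the \emph{existence} of a finite $B$ for the fixed compact $(M,g)$; it does not by itself deliver the quantitative statement in the theorem that $B$ can be taken to depend only on a lower Ricci bound and on bounds for $\nabla^k\Rm$. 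Making that dependence explicit requires running the localization and blow-up arguments with uniform constants over a class of metrics, rather than for a single metric --- you gesture at this in your last paragraph, but it should be flagged as an additional layer of work, not just a bookkeeping matter inside the contradiction. Neither point is a fatal flaw in your outline, since you present it as a sketch and correctly identify the blow-up analysis as the crux, but they are the places where a referee would press.
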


Note that Hebey's result was shown for complete manifolds, and in that situation $B$ depends on the injectivity radius. However, we are interested in compact manifolds, so $B$ will not depend on the injectivity radius. 

Later, along the Ricci flow, Q. Zhang proved the following uniform Sobolev inequality in \cite{bookCRC2010}:

\begin{theorem}\label{thm_Zhang}
Let $M^n$ be a compact Riemannian manifold, with $n\geq 3$ and let $\big(M,g(t)\big)_{t\in[0,T]}$ be a solution to the Ricci flow $\frac{\partial g}{\partial t}=-2\Ric$. Let $A$ and $B$ be positive numbers such that for $(M,g(0))$ the following Sobolev inequality holds: for any $v\in W^{1,2}(M,g(0))$,
\[\left(\int_M|v|^{\frac{2n}{n-2}}\ d\mu(g(0))\right)^{\frac{n-2}{n}}\leq A\int_M|\nabla v|^2\ d\mu(g(0))+B\int_Mv^2\ d\mu(g(0))\]

Then there exist positive functions $A(t)$, $B(t)$ depending only on the initial metric $g(0)$ in terms of $A$ and $B$, and $t$ such that, for all $v\in W^{1,2}(M,g(t))$, $t>0$, the following holds
 
\[\left(\int_M|v|^{\frac{2n}{n-2}}\ d\mu(g(t))\right)^{\frac{n-2}{n}}\leq A(t)\int_M\left(|\nabla v|^2+\frac{1}{4}Rv^2\right)\ d\mu(g(t))+B(t)\int_Mv^2\ d\mu(g(t))\]

Here $R$ is the scalar curvature with respect to $g(t)$. Moreover, if $R(x,0)>0$, then $A(t)$ is independent of $t$ and $B(t)=0$.
\end{theorem}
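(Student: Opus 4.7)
The plan is to transfer the initial Sobolev inequality to positive time via Perelman's $\mathcal{W}$-entropy, specializing the $\mathbb{W}_{\alpha}$ functional of Section \ref{prelim} to $\alpha\equiv 0$. The strategy is the standard chain: Sobolev $\Rightarrow$ log-Sobolev $\Rightarrow$ (time-monotonicity of $\mu$) $\Rightarrow$ log-Sobolev at time $t$ $\Rightarrow$ Sobolev at time $t$, while carefully tracking how the constants depend on $t$ and on the initial data.

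First, I would convert the hypothesized $t=0$ Sobolev inequality into a one-parameter family of logarithmic Sobolev inequalities on $(M,g(0))$. For $u\in W^{1,2}$ with $\int u^2\,d\mu_{0}=1$, Jensen's inequality applied to the concave function $\ln$ and the probability measure $u^{2}d\mu_{0}$ gives $\int u^2\ln u^2\,d\mu_{0}\leq \tfrac{n}{2}\ln\bigl(\int u^{\frac{2n}{n-2}}\bigr)^{\frac{n-2}{n}}$; inserting the Sobolev hypothesis and the elementary bound $\ln x\leq \sigma x-\ln\sigma-1$, then absorbing a uniform lower bound on $R(\cdot,0)$ (which exists by compactness), yields
\[\int_{M} u^{2}\ln u^{2}\,d\mu_{0}\leq \sigma\int_{M}\Bigl(|\nabla u|^{2}+\tfrac{R}{4}u^{2}\Bigr)d\mu_{0}-\tfrac{n}{2}\ln\sigma+C_{0}(\sigma,A,B,n,R_{\min}(0))\]
for every $\sigma>0$. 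Writing $u^{2}=(4\pi\tau)^{-n/2}e^{-f}$ and choosing $\sigma=4\tau$, this is exactly an explicit lower bound on $\mu(g(0),\tau)$ in terms of $\tau$ and the initial data.

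Second, by Lemma \ref{basicmu}(c) with $\alpha\equiv 0$ (which is Perelman's original $\mu$-monotonicity along the Ricci flow), the Ricci flow preserves this bound in the sense that $\mu(g(t),\tau)\geq \mu(g(0),t+\tau)$. Reversing the substitution from the first step produces, at each time $t\in[0,T]$, a one-parameter family of logarithmic Sobolev inequalities on $(M,g(t))$ whose constants depend only on $A,B,n,R_{\min}(0)$, and $t$. In particular, no assumption on the curvature at later times is needed: everything is transported from $t=0$ by the entropy monotonicity.

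Third, I would convert this family of log-Sobolev inequalities at time $t$ back into a single Sobolev inequality of the stated form. This is accomplished by Davies' ultracontractivity method: the $\sigma$-parameter family of log-Sobolev inequalities translates into an $L^{2}\to L^{2n/(n-2)}$ bound for the semigroup generated by $\triangle-\tfrac{R}{4}$ on $(M,g(t))$, which by the standard duality/interpolation argument is equivalent to a Sobolev inequality with explicit constants $A(t),B(t)$ determined by $C_{0}$ and $t$. For the distinguished case $R(\cdot,0)>0$, the maximum principle applied to $\tfrac{\partial R}{\partial t}=\triangle R+2|\Rc|^{2}\geq \triangle R+\tfrac{2}{n}R^{2}$ preserves positivity of $R$ for all $t\in[0,T]$, so the $\frac{R}{4}u^{2}$ term alone controls the free parameter $\sigma$ without incurring an additive $\int u^{2}$ error, yielding $B(t)\equiv 0$ and an $A(t)$ independent of $t$. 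The main technical obstacle is the quantitative conversion in the third step, where one must keep the $t$-dependence of $A(t),B(t)$ explicit through a Moser–Nash iteration adapted to the evolving metric; this delicate bookkeeping is precisely what is carried out in \cite{bookCRC2010}.
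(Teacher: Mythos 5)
The paper does not actually prove Theorem \ref{thm_Zhang}; the statement is quoted from Zhang's book \cite{bookCRC2010}, and the only hint of a proof the paper offers is the remark that the argument turns on the eigenvalue $\lambda_0=\inf_{\|v\|_2=1}\int_M(4|\nabla v|^2+Rv^2)\,d\mu(g(0))$ of Perelman's $\mathcal{F}$-entropy. Against that benchmark, your sketch is a correct high-level reconstruction of Zhang's actual strategy: Jensen's inequality plus the tangent-line bound $\ln x\le\sigma x-\ln\sigma-1$ converts the Sobolev hypothesis into a $\sigma$-parametrized family of log-Sobolev inequalities (equivalently a lower bound on $\mu(g(0),\tau)$); Perelman's entropy monotonicity $\mu(g(t),\tau)\ge\mu(g(0),t+\tau)$ transports this family forward in time; and the Davies ultracontractivity / heat-kernel machinery recovers a genuine Sobolev inequality with explicit $A(t),B(t)$ at each positive time. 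Two small remarks. First, the case $R(\cdot,0)>0$ is, in Zhang's treatment, most naturally organized around the positivity and monotonicity of $\lambda_0$ --- which the paper itself points at --- rather than preservation of $R_{\min}>0$; your maximum-principle argument is nonetheless correct and does imply $\lambda_0(t)>0$, so it suffices for the version stated here. Second, you rightly flag the quantitative log-Sobolev-to-Sobolev return (the Davies/Moser bookkeeping with the evolving metric) as the real work; a self-contained proof would have to reproduce that step rather than cite it, but since the paper itself is content to quote the theorem, your plan captures all the ideas that matter.
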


The proof of this theorem relies on the analysis of $\lambda_0$, which is the first eigenvalue of Perelman's $\mathcal{F}$-entropy, i.e.:
\[\lambda_0=\inf\limits_{||v||_2=1}\int_M(4|\nabla v|^2+Rv^2)\ d\mu(g(0)).\]

Recently it has been proven that the same theorem holds for the Ricci flow coupled with the harmonic map flow (see \cite{LXGWK13}), where the analysis is now based on 
\[\lambda_0^\alpha=\inf\limits_{||v||_2=1}\int_M(4|\nabla v|^2+Sv^2)\ d\mu(g(0)),\]
where $S=R-\alpha|\nabla\phi|^2$. In the new setting, at time $t$ there are positive functions $A(t)$ and $B(t)$ such that   
\[\left(\int_M|v|^{\frac{2n}{n-2}}\ d\mu(g(t))\right)^{\frac{n-2}{n}}\leq A(t)\int_M\left(|\nabla v|^2+\frac{1}{4}Sv^2\right)\ d\mu(g(t))+B(t)\int_Mv^2\ d\mu(g(t))\]

Moreover, if $\lambda_0^\alpha>0$, which is automatically satisfied if at the initial time $R(0)>\alpha (0)|\nabla\phi(0)|$, then $A(t)$ is a constant and $B(t)=0$.  

Recall that R. M\"uller introduced in \cite{Muller09} the $\mathcal{F}_{\alpha}$ and the $\mathbb{W}_\alpha$ functionals, which are the natural analogues of the $\mathcal{F}$ and the $\mathcal{W}$ functionals for the Ricci flow, introduced by Perelman.   

\section{Proof of theorem \ref{theorem} and its corollary}\label{patru}


We start the proof by assuming, without loss of generality, that $s=0$. By the semigroup property of the heat kernel \cite[Theorem 2.6]{guenther02} and the Cauchy-Bunyakovsky-Schwarz inequality we have that:
\begin{align*}
H(x,0;y,t)& =\int_M H\left(x,0;z,\frac{t}{2}\right)H\left(z,\frac{t}{2};y,t\right)d\mu\left(z,\frac{t}{2}\right) \\
          & \leq \left[\int_M H^2\left(x,0;z,\frac{t}{2}\right)d\mu\left(z,\frac{t}{2}\right) \right]^{1/2}\left[\int_M H^2\left(z,\frac{t}{2};y,t\right)d\mu\left(z,\frac{t}{2}
\right)\right]^{1/2}
\end{align*}

The key of the proof consists in determining upper bounds for the following two quantities:
\begin{align*}
\alpha(t)=\int_M H^2(x,s;y,t)d\mu(y,t)& \text{ (for $s$ fixed)}\\
\beta(s)=\int_M H^2(x,s;y,t)d\mu(x,s)& \text{ (for $t$ fixed)}
\end{align*}

We will find an ordinary differential inequality for each of the two.

We first deduce a bound on $\alpha(t)$, by finding an inequality involving $\alpha'(t)$ and $\alpha(t)$. Note that we will treat $H$ as being a function of $(x,t)$, the $(y,s)$ part is fixed. 

Since $\frac{d}{dt}(d\mu)=-Sd\mu=(-R+\alpha|\nabla\phi|^2)d\mu$, one has:
\begin{align}\label{pprim}
\alpha'(t) & =2\int_M H\cdot H_t\ d\mu(y,t)-\int_M H^2 (R-\alpha|\nabla\phi|^2)\ d\mu(y,t) \notag\\
 & = 2\int_M H\cdot(\triangle H)\ d\mu(y,t)-\int_M H^2 (R-\alpha|\nabla\phi|^2)\ d\mu(y,t)\notag \\
      &= -2\int_M|\nabla H|^2\ d\mu-\int_M H^2(R-\alpha|\nabla\phi|^2)\ d\mu \notag \\ 
      & \leq -\int_M[|\nabla H|^2+(R-\alpha|\nabla\phi|^2) H^2]\ d\mu(y,t)
\end{align}

Estimating $\int_M|\nabla H|^2d\mu$ will make use of the Sobolev imbedding theorem, which gives a relation between $\int_M|\nabla H|^2d\mu$ and $\int_M H^2d\mu$, and the H\"older inequality to bound the term involving $H^{2n/(n-2)}$:
\begin{align}\label{Holder}
\int_M H^2\ d\mu(y,t)\leq \left[\int_M H^{\frac{2n}{n-2}}\ d\mu(y,t) \right]^{\frac{n-2}{n+2}} \left[\int_M H\ d\mu(y,t) \right]^{\frac{4}{n+2}}
\end{align}

By theorem (\ref{thm_Aubin}) one gets that at time $t=0$, the following inequality holds for any $v\in W^{1,2}(M,g(0))$ (hence also for $H(x,s;y,t)$, which is smooth) and for some $B>0$:
\[\left(\int_M|v|^{\frac{2n}{n-2}}\ d\mu(g(0))\right)^{\frac{n-2}{n}}\leq K(n,2)^2\int_M|\nabla v|^2\ d\mu(g(0))+B\int_Mv^2\ d\mu(g(0))\]

Then by Theorem \ref{thm_Zhang} (applied for the $(RH)_\alpha$ flow) it follows that at any time $t\in (0,T]$ and for all $v\in W^{1,2}(M,g(t))$:
\[\left(\int_M|v|^{\frac{2n}{n-2}}\ d\mu(g(t))\right)^{\frac{n-2}{n}}\leq A(t)\int_M\left(|\nabla v|^2+\frac{1}{4}(R-\alpha|\nabla\phi|^2)v^2\right)\ d\mu(g(t))+B(t)\int_Mv^2\ d\mu(g(t))\]

where $A(t)$ is a positive function depending on $g(0)$ and $K(n,2)^2$, while  $B(t)$ is also a positive function, depending on $B$, which in turn depends on the initial Ricci curvature on $M$ and on the derivatives of the curvatures on $M$ at time $0$.

Applying the above for the heat kernel, one can relate the RHS of (\ref{Holder}) to the Sobolev inequality:
\begin{gather}\label{Sobolev1}
\int_M H^2\ d\mu(y,t)  \leq \left[\int_M H^{\frac{2n}{n-2}}\ d\mu(y,t) \right]^{\frac{n-2}{n+2}} \left[\int_M H\ d\mu(y,t) \right]^{\frac{4}{n+2}} \notag\\
\leq \left[ A(t)\int_M\left(|\nabla H|^2+\frac{1}{4}(R-\alpha|\nabla\phi|^2)H^2\right)\ d\mu(y,t)+B(t)\int_MH^2\ d\mu(y,t)\right]^{\frac{n}{n+2}}\left[\int_M H\ d\mu(y,t)\right]^{\frac{4}{n+2}}
\end{gather}

We will now focus our attention to $J(t):=\int_M H(x,s;y,t)\ d\mu(y,t)$. By the definition of the fundamental solution $\int_M H(x,s;y,t)d\mu(x,s)=1$, but that's not true if one integrates in $(y,t)$. Our goal will be to obtain a differential inequality for $J(t)$, from which a bound will be found. 
\begin{align*}
J'(t)&=\int_M H_t(x,t;y,s)\ d\mu(y,t)+\int_M H(x,s;y,t)\frac{d}{dt}\ d\mu(y,t)=\int_M\triangle_yH(x,s;y,t)\ d\mu(y,t)\\
     &-\int_M H(x,s;y,t)S(y,t)\ d\mu(y,t)=-\int_M H(x,s;y,t)S(y,t)\ d\mu(y,t)
\end{align*}
the first term being $0$, as $M$ is a compact manifold, without boundary.

Recall that $S$ satisfies the following equation (Theorem 4.4 from \cite{Muller09}):
\begin{align*}
\frac{\partial S}{\partial t}=\bigtriangleup S+2|S_{ij}|^2+2\alpha|\tau_g\phi|^2-\frac{\partial\alpha}{\partial t}|\nabla\phi|^2
\end{align*}
But $|S_{ij}|^2\geq \frac{1}{n}S^2$ (this is true for any 2-tensor) and $\alpha(t)$ is a positive function, non-increasing in time, so one obtains: 
\[\frac{\partial S}{\partial t}-\bigtriangleup S-\frac{2}{n}S^2\geq 0\]

Since the solutions of the ODE $\frac{d\rho}{dt}=\frac{2}{n}\rho^2$
are $\rho(t)=\frac{n}{n\rho(0)^{-1}-2t}$, by the maximum principle
we get a bound on $S$, for $s\leq \tau\leq t$:
\begin{align*}
S(z,\tau)\geq \frac{n}{n(\inf_{t=0}S)^{-1}-2\tau}=\frac{1}{(\inf_{t=0}S)^{-1}-\frac{2}{n}\tau}:=\frac{1}{m_0-c_n\tau}
\end{align*}
(here and later, if $\inf_{t=0} S\geq 0$, then the above is regarded
as zero).

\bigskip
Using this lower bound for $S$ (for $\tau\in(s,t]$), we get:
\begin{align*}
J'(\tau)\leq -\frac{1}{m_0-c_n\tau}J(\tau)
\end{align*}
After integrating the above from $s$ to $t$, while noting that by $J(s)$ one understands: 
\[J(s)=\lim\limits_{t\to s}\int_M H(x,s;y,t)\ d\mu(y,t)=\int_M \lim\limits_{t\to s} H(x,s;y,t)\ d\mu(y,t)=\int_M \delta_{y}(x)\ d\mu(x,s)=1\]
one obtains:
\begin{align*}
J(t)\leq \left(\frac{m_0-c_nt}{m_0-c_ns}\right)^{\frac{n}{2}}:=(\chi_{t,s})^{\frac{n}{2}}
\end{align*}

Hence $\int_M H(x,s;y,t)\ d\mu(y,t)\leq (\chi_{t,s}))^{\frac{n}{2}}$ and (\ref{Sobolev1}) becomes:

\begin{align*}
\int_M H^2d\mu(y,t)\leq \left[ A(t)\int_M\left(|\nabla H|^2+\frac{1}{4}SH^2\right)\ d\mu(y,t)+B(t)\int_MH^2\ d\mu(y,t)\right]^{\frac{n}{n+2}}\left(\chi_{t,s}\right)^{\frac{2n}{n+2}}
\end{align*}

From this it follows immediately that:
\begin{align*}
\int_M |\nabla H|^2\ d\mu(y,t)\geq \frac{1}{\chi^2_{t,s}A(t)}\left[\int_M H^2\ d\mu(y,t)\right]^{\frac{n+2}{n}} -\frac{B(t)}{A(t)}\int_M H^2\ d\mu(y,t)-\frac{1}{4}\int SH^2\ d\mu(y,t)
\end{align*}

Combining this with the inequality from (\ref{pprim}), one obtains the following differential inequality for $\alpha(t)$:
\begin{align*}
\alpha'(t)\leq -\frac{1}{\chi^2_{t,s}A(t)} \alpha(t)^{\frac{n+2}{n}}+\frac{B(t)}{A(t)}\alpha(t)-\frac{3}{4}\int SH^2d\mu(y,t)
\end{align*}

Note that the above is true for any $\tau\in(s,t]$. For the following computation, we will consider $t$ fixed as well.  Recall that for $\tau\in (s,t]$, $S(\cdot, \tau)\geq\frac{1}{m_0-c_n\tau}$. Denoting with:
\begin{align*}
f(\tau):=\frac{B(\tau)}{A(\tau)}-\frac{3}{4}\cdot\frac{1}{m_0-c_n\tau}
\end{align*}
we get:
\begin{align*}
\alpha'(\tau)\leq -\frac{1}{\chi^{2}_{\tau,s}A(\tau)} \alpha(\tau)^{\frac{n+2}{n}}+f(\tau)\alpha(\tau)
\end{align*}

Let $F(\tau)$ be an antiderivative of $h(\tau)$. By the integrating factor method, one finds:
\begin{align*}
(e^{-F(\tau)}\alpha(\tau))'\leq  -\frac{1}{\chi^{2}(\tau)A(\tau)}(e^{-F(\tau)}\alpha(\tau))^{\frac{n+2}{n}}e^{\frac{2}{n}F(\tau)}
\end{align*}
Since the above is true for any $\tau\in(s,t]$, by integrating from $s$ to $t$ and taking into account that 
\[\lim\limits_{\tau\searrow s}\alpha(\tau)=\int_M\lim\limits_{\tau\searrow s}H^2(x,\tau;y,s)\ d\mu(x,\tau)=\int_M\delta^2_y(x)\ d\mu(x,s)=0\]
one obtains the first necessary bound:
\begin{align*}
\alpha(t)\leq \frac{C_ne^{F(t)}}{\left(\int\limits_{s}^{t}\frac{e^{\frac{2}{n}F(\tau)}}{\chi^{2}(\tau)A(\tau)} d\tau\right)^{\frac{n}{2}}}
\end{align*}
where $C_n=\left(\frac{2}{n}\right)^{\frac{n}{2}}$.

\bigskip
The next step is to estimate $\beta(s)=\int_M H^2(x,t;y,s)\ d\mu(x,s)$, for which the computation is different, due to the asymmetry of the
equation. As stated above, the second entries of $H$ satisfy the conjugated equation:
\begin{align*}
\triangle_x H(x,s;y,t)+\partial_s H(x,s;y,t)- S H(x,s;y,t)=0
\end{align*}

Proceeding just as in the $\alpha(t)$ case, we get the following:
\begin{align*}
\beta'(s)& =2\int_M HH_s\ d\mu(x,s)-\int_M SH^2\ d\mu(x,s)\\
         &=2\int_M H(-\triangle H +SH)\ d\mu(x,s)-\int_M SH^2\ d\mu(x,s)\\
       & =-2\int_M H(\triangle H)\ d\mu(x,s)+\int_M SH^2\ d\mu(x,s)\\
       & =2\int_M |\nabla H|^2\ d\mu(x,s)+\int_M SH^2\ d\mu(x,s) \\
     & \geq \int_M |\nabla H|^2\ d\mu(x,s)+\int_M SH^2\ d\mu(x,s)
\end{align*}

Hence
\begin{align*}
\beta'(s)\geq \int_M (|\nabla H|^2+SH^2)\ d\mu(x,s)
\end{align*}

But this time, by the property of the heat kernel:
\begin{align*}
\tilde{J}(s):=\int_M H(x,s;y,t)\ d\mu(x,s)=1
\end{align*}

so by applying H\"older (as for $\alpha(t)$) and relating it to the Sobolev inequality, we get:
\begin{align*}
\int_M H^2\ d\mu(x,s)\leq \hspace{6cm}  \\
 \left[ A(s)\int_M\left(|\nabla H|^2+\frac{1}{4}SH^2\right)\ d\mu(x,s)+B(s)\int_MH^2\ d\mu(x,s)\right]^{\frac{n}{n+2}}\left[\int_M H\ d\mu(x,s)\right]^{\frac{4}{n+2}} \\
 =\left[ A(s)\int_M\left(|\nabla H|^2+\frac{1}{4}SH^2\right)\ d\mu(x,s)+B(s)\int_MH^2\ d\mu(x,s)\right]^{\frac{n}{n+2}}
\end{align*}

Following the same steps as for $\alpha(t)$, one finds
\begin{align*}
\beta'(s)\geq \frac{1}{A(s)} \beta(s)^{\frac{n+2}{n}}-f(s)\beta(s)
\end{align*}
($f(s)$ denotes, as before, $\frac{B(s)}{A(s)}-\frac{3}{4}\cdot\frac{1}{m_0-c_ns}$)

The above is true for any $\tau\in[s,t)$. We will apply again the integrating factor method, with $F(\tau)$ being the same antiderivative of $f(\tau)$ as above. 
For $\tau\in [s,t)$, the following holds:
\begin{align*}
(e^{F(\tau)}\beta(\tau))'\geq \frac{1}{A(\tau)}(e^{F(\tau)}\beta(\tau))^{\frac{n+2}{n}}e^{-\frac{2}{n}F(\tau)}
\end{align*}

Integrating between $s$ and $t$, and taking into account that
\[ \lim\limits_{\tau\nearrow t}\beta(\tau)=\int_M\lim\limits_{\tau\nearrow t}H^2(x,t;y,\tau)\ d\mu(y,\tau)=\int_M\delta^2_y(x)\ d\mu(y,t)=0\] 
we get the second desired bound:
\begin{align*}
\beta(s)\leq \frac{C_ne^{-F(s)}}{\left(\int\limits_{s}^{t} \frac{e^{-\frac{2}{n}F(\tau)}}{A(\tau)}\ d\tau\right)^{n/2}}
\end{align*}

From the estimates of $\alpha$ and $\beta$ we get the following:

\begin{align*}
\alpha\left(\frac{t}{2}\right)=\int_M H^2\left(x,0;z,\frac{t}{2}\right)\ d\mu\left(z,t/2\right)\leq \frac{C_ne^{F(t/2)}}{\left( \int\limits_{0}^{t/2}\left(\frac{m_0-c_n\tau}{m_0}\right)^{-2} \frac{e^{\frac{2}{n}F(\tau)}}{A(\tau)} \ d\tau\right)^{\frac{n}{2}}}
\end{align*}
\begin{align*}
\beta\left(\frac{t}{2}\right)=\int_M H^2\left(z,\frac{t}{2};y,t\right)\ d\mu\left(z,\frac{t}{2}\right)\leq \frac{C_ne^{-F(t/2)}}{\left(\int\limits_{t/2}^{t}
\frac{e^{-\frac{2}{n}F(\tau)}}{A(\tau)} \ d\tau\right)^{n/2}}
\end{align*}

Here, we may choose $F(t/2)=\int\limits_{0}^{t/2}\left[\frac{B(\tau)}{A(\tau)}-\frac{3}{4}\cdot\frac{1}{m_0-c_n\tau}\right]\ d\tau$, since the relation is true for any antiderivative of $f(\tau)=\frac{B(\tau)}{A(\tau)}-\frac{3}{4}\cdot\frac{1}{m_0-c_n\tau}$.

The conclusion follows from multiplying the relations above.


\subsection{Proof of the corollary}

In the special case when $S(x,0)>0$, then $S(x,t)>0$ for all $t>0$, so it follows that $J'(\tau)\leq 0$. $J(\tau)$ is thus decreasing, so $J(\tau)\leq J(s)=1$, which leads to the differential inequality for $\alpha(t)$ to be: 

\begin{align*}
\alpha'(t)\leq -\frac{1}{A(t)} \alpha(t)^{\frac{n+2}{n}}+\frac{B(t)}{A(t)}\alpha(t)
\end{align*}

And from this the bound for $\alpha(t)$ becomes:
\begin{align*}
\alpha(t)\leq \frac{C_ne^{F(t)}}{\left(\int\limits_{s}^{t}\frac{e^{\frac{2}{n}F(\tau)}}{A(\tau)} d\tau\right)^{\frac{n}{2}}}
\end{align*}
where $F(\tau)$ is the anti-derivative of $\frac{B(\tau)}{A(\tau)}$ such that $F(s)\neq 0$ and $F(t)\neq 0$.

Similarly, one obtains for $\beta(s)$:
\begin{align*}
\beta'(s)\geq \frac{1}{A(s)} \beta(s)^{\frac{n+2}{n}}-\frac{B(s)}{A(s)}\alpha(s)
\end{align*}
and from this: 
\begin{align*}
\beta(s)\leq \frac{C_ne^{-F(s)}}{\left(\int\limits_{s}^{t} \frac{e^{-\frac{2}{n}F(\tau)}}{A(\tau)}\ d\tau\right)^{n/2}}
\end{align*}
where $F(\tau)$ is the same anti-derivative of $\frac{B(\tau)}{A(\tau)}$ as above. 

By (\ref{thm_Zhang}), in the case of $S(x,0)>0$, the function $A(t)$ is a constant, while $B(t)=0$. Recall that $A(t)=A(0)$ is in fact $K(n,2)$, where $K(n,2)$ is the best constant in the Sobolev imbedding.

One has then that $F(t)=\frac{B}{A}t=0$. Using this, we get:
\begin{align*}
H(x,s;y,t)\leq &
\frac{C_n}{\left(\int\limits_{s}^{\frac{s+t}{2}}\frac{1}{A(0)} \ d\tau\right)^{\frac{n}{4}} \left(\int\limits_{\frac{s+t}{2}}^{t}
\frac{1}{A(0)} \ d\tau\right)^{\frac{n}{4}}}=\frac{C_n}{\left[\left(\frac{t-s}{2A}\right)^2\right]^{\frac{n}{4}}} = \frac{\tilde{C}_n}{(t-s)^{\frac{n}{2}}}
\end{align*}

where $\tilde{C}_n=C_n\cdot (2A)^{\frac{n}{2}}=\left(\frac{4K(n,2)}{n}\right)^{\frac{n}{2}}$.

This proves the desired corollary.

\subsection{Proof of theorem \ref{comp-theorem}}

It is interesting to compare the two estimates on the heat kernel, the one appearing in Lemma \ref{conjestimateS} and the one in this last corollary. Assuming that $S(x,0)>0$, then Lemma \ref{conjestimateS} showed that
\[H(x,t;y,T) \leq e^{B-(T-t)D/3}(4\pi (T-t))^{-n/2}\] where $B =-\inf\limits_{0<\tau\leq T} \mu_{\alpha}(g,\phi,\tau)$ and $D =\inf_{M\times \{0\}}{S}$.  

However, by the corollary to theorem \ref{theorem}, one has that \[H(x,t;y,T)\leq \tilde{C}_n(T-t)^{-n/2}.\]

Since $\tilde{C}_n$ is a universal constant, one can conclude that 
\[B\leq \frac{(T-t)D}{3}\ln\left[(4\pi)^{n/2}\tilde{C}_n\right].\]

Therefore, one has the following inequality: 
\[\mu_\alpha(g,\phi,\tau)\geq \frac{\tau D}{3}\ln\left[(4\pi)^{n/2}\tilde{C}_n\right]\]
where  $\mu_\alpha$ is the associated functional $\mu_\alpha(g,\phi,\tau)=\inf\limits_f\mathbb{W}_\alpha(g,\phi,\tau, f)$,  $D=\inf_{M\times \{0\}}{S}$ and $\tilde{C}_n=\left(\frac{4K(n,2)}{n}\right)^{\frac{n}{2}}$.


\bibliographystyle{unsrt}
\bibliography{bio}

\end{document}